\documentclass[journal]{IEEEtran}
                                                          % paper

\IEEEoverridecommandlockouts                              % This command is only
                                                          % needed if you want to
                                                          % use the \thanks command
%\overrideIEEEmargins
% See the \addtolength command later in the file to balance the column lengths
% on the last page of the document
\usepackage{cite}
\usepackage{enumitem}
\usepackage{amsmath,amssymb,mathrsfs,amsthm,amsfonts}
\newtheorem{thm}{Theorem}[]
\newtheorem{prop}[thm]{Proposition}
\newtheorem{lemma}[thm]{Lemma}
\newtheorem{asmn}[thm]{Assumption}
\usepackage{url}
\usepackage{tabularx}
\usepackage{array}
\newcolumntype{L}{>{\centering\arraybackslash}m{1.0cm}}
% *** GRAPHICS RELATED PACKAGES ***
\usepackage[ruled,linesnumberedhidden,titlenotnumbered]{algorithm2e}
\usepackage{algpseudocode}

\usepackage[pdftex]{graphicx}
\DeclareGraphicsExtensions{.pdf,.jpeg,.png,.JPG,.eps,.tif}
\graphicspath{{figs/}}

\usepackage{epstopdf}
\usepackage{booktabs}
\usepackage{comment}

\usepackage{multirow}
\usepackage{multicol}
\usepackage{tabularx}

\hyphenation{op-tical net-works semi-conduc-tor}

\usepackage{enumitem}
\usepackage{wrapfig}
\usepackage{subcaption}

\usepackage{hyperref}

\usepackage{cleveref}
\ifCLASSOPTIONcompsoc
\else
\fi

\usepackage{color}

\usepackage[normalem]{ulem}

\definecolor{darkgrn}{rgb}{0, 0.8, 0}

\newcommand{\txbl}[1]{\textcolor{blue}{#1}}

% from YL
\newcommand{\A}{\mathscr{A}}
\newcommand{\B}{\mathscr{B}}
\newcommand{\bphi}{\boldsymbol{\varphi}}
\newcommand{\bla}{\boldsymbol{\lambda}}
\newcommand{\bmu}{\boldsymbol{\mu}}

\newcommand{\bz}{\mathbf{z}_j}

\newcommand{\bzb}{\overline{\mathbf{z}}_j}
\newcommand{\p}{P_{ij}}
\newcommand{\q}{Q_{ij}}
\newcommand{\ri}{r_{ij}}
\newcommand{\x}{x_{ij}}
\newcommand{\vi}{v_i}

\newcommand{\bc}{\bar{c}}
\newcommand{\bzero}{\mathbf{0}}

\newcommand{\E}{\mathcal{E}}
\newcommand{\N}{\mathcal{N}}

\newcommand{\D}{\Delta}
\newcommand{\Dt}{\D^{(t)}}

\newcommand{\pt}[1]{P^{(t)}_{#1}}

\newcommand{\pa}[1]{P^{(t+1)}_{#1}}
\newcommand{\qt}[1]{Q^{(t)}_{#1}}

\newcommand{\qa}[1]{Q^{(t+1)}_{#1}}
\newcommand{\lt}[1]{\ell^{(t)}_{#1}}
\newcommand{\la}[1]{\ell^{(t+1)}_{#1}}

\newcommand{\rt}[1]{r_{#1}}
\newcommand{\xt}[1]{x_{#1}}
\newcommand{\vt}[1]{v^{(t)}_{#1}}
\newcommand{\va}[1]{v^{(t+1)}_{#1}}
\newcommand{\vm}[1]{v^{(t-1)}_{#1}}

\newcommand{\pti}[1]{P^{(t_i)}_{#1}}
\newcommand{\pai}[1]{P^{(t_i+1)}_{#1}}
\newcommand{\qti}[1]{Q^{(t_i)}_{#1}}

\newcommand{\rti}[1]{r_{#1}}
\newcommand{\xti}[1]{x_{#1}}
\newcommand{\vmi}[1]{v^{(t_i-1)}_{#1}}

\newcommand{\imi}{\tilde{j}} % for imaginary i

\begin{document}

%\title{Distributed Control in Radial Power Distribution Systems using the Method of Multipliers}
\title{Convergence Guarantees of a Distributed Network Equivalence Algorithm for Distribution-OPF}

\author{Yunqi Luo, Rabayet Sadnan,~\IEEEmembership{Student Member,~IEEE}, Bala Krishnamoorthy, and~Anamika Dubey,~\IEEEmembership{Senior Member,~IEEE}
\vspace*{-0.2in}
  % <-this % stops a space
\thanks{YL is with University of Science and Technology of China, RS and AD are with EECS, Washington State University, Pullman and, BK is with  Mathematics and Statistics, Washington State University, Vancouver. YL and BK acknowledge funding from NSF through grant 1819229. RS and AD acknowledge funding from DOE under contract DE-AC05-76RL01830.
 Corresponding author E-mail: anamika.dubey@wsu.edu.}}
\maketitle

\begin{abstract}
  Optimization-based approaches have been proposed to handle the integration of distributed energy resources into the electric power distribution system. 
  The added computational complexities of the resulting optimal power flow (OPF) problem have been managed by approximated or relaxed models; but they may lead to infeasible or inaccurate solutions.
  Other approaches based on decomposition-based methods require several message-passing rounds for relatively small systems, causing significant delays in decision-making.
  We propose a distributed algorithm with convergence guarantees called ENDiCo-OPF for nonlinear OPF.
  Our method is based on a previously developed decomposition-based optimization method that employs network equivalence.
  We derive a sufficient condition under which ENDiCo-OPF is guaranteed to converge for a single iteration step on a local subsystem.
  We then derive conditions that guarantee convergence of a local subsystem over time.
  Finally, we derive conditions under a suitable assumption that when satisfied in a time sequential manner guarantee global convergence of a \emph{line network} in a sequential manner.
  We also present simulations using the IEEE-123 bus test system to demonstrate the algorithm's effectiveness and provide additional insights into theoretical results.
\end{abstract}

\begin{IEEEkeywords}
Distributed optimization, optimal power flow, power distribution systems, Method of multipliers.
\end{IEEEkeywords}

\IEEEpeerreviewmaketitle
\section*{Nomenclature}
%\begin{small}
\addcontentsline{toc}{section}{Nomenclature}
\begin{IEEEdescription}[\IEEEusemathlabelsep\IEEEsetlabelwidth{$V_1,V_2,V_3$}]
\item[\textit{Sets}]
\item[$\N$] Set of all nodes 
\item[$\E$] Set of all distribution lines
\item[$\N_D$] Set of all DER nodes in the system
\item[$\N_L$] Set of all load nodes in the system
\vspace{0.1cm}
\item[\textit{Functions}]
\item[\textbf{$f$}] Cost function for the OPF sub-problem
\item[\textbf{$F$}] Function describing the sub-problem
\item[$\A$] Equality constraint function
\item[$\B$] Inequality constraint function
\item[$L$] Lagrangian function
\vspace{0.1cm}
\item[\textit{Variables}]   
\item[$x_{ij}$] Line reactance for line $(i,j)$
\item[$r_{ij}$] Line resistance for line $(i,j)$
\item[$V_i$] Voltage magnitude for node $i$
\item[$v_i = |V_i|^2$] Squared voltage magnitude for node $i$
\item[$l_{ij}$] Squared magnitude of the current flow in line $(i,j)$
\item[$P_{ij}$] Sending end active power for line $(i,j)$
\item[$Q_{ij}$] Sending end reactive power for line $(i,j)$
\item[$p_{L_j}$] Active load demand at node $j$
\item[$q_{L_j}$] Reactive load demand at node $j$
\item[$p_{Dj}$] Active power output by DER at node $j$
\item[$q_{Dj}$] Reactive power output by DER at node $j$
\item[$S_{DRj}$] kVA rating of the DER at node $j$
\item[$I_{ij}^{\text{rated}}$] Thermal rating for the line $(i,j)$
\item[$\bz$] Vector of local variables at node $j$
\item[$R$] Cost coefficient vector
\item[$\lambda$] Dual variable for equality constraints of sub-problems
\item[$\mu$] Dual variable for inequality constraints of sub-problems
\item[$\varphi$] Auxiliary variable to convert inequality constraint to the equation
\item[$\Delta^{(\cdot)}\geq 1$] Convergence parameter
\end{IEEEdescription}

\vspace{-0.3cm}
\section{Introduction}
\label{Introduction}

The nature and the requirements of power systems, especially at the distribution level, are rapidly changing with the large-scale integration of controllable distributed energy resources (DERs). Optimal power flow (OPF) methods have emerged as a possible mechanism to coordinate DERs for specified grid services \cite{castillo2013survey,patari2021distributed,lin2019decentralized,molzahn2019survey,molzahn2017survey}. However, the nonconvex power flow constraints in the distribution OPF (D-OPF) problem poses significant computational challenges that increase drastically with the size of the distribution systems \cite{molzahn2017survey,molzahn2019survey}. The OPF problems involve nonconvex power flow equations and are generally NP-hard problems, even for the radial power distribution networks \cite{molzahn2019survey}. There is an extensive body of literature highlighting the computational challenges of OPF problems that worsens with the system size \cite{avramidis2021practical, usman2022three,jha2022distribution}. Existing methods manage the computational challenges using convex relaxation or linear approximation techniques that may lead to infeasible power flow solutions or high optimality gap \cite{gan2014convex,JhaPV}. 
The existing literature also includes successive linear programming and convex iterations techniques to address the scalability of D-OPF problems, while still achieving feasible and optimal power flow solutions \cite{castillo2015successive,watson2014current ,Jha2021,wang2017chordal,zamzam2016beyond}. However, they have not been shown to scale beyond a medium size feeder.

\begin{table*}[htp!]
\begin{minipage}{\textwidth}
\centering
\caption{Qualitative Comparison of available Distributed OPFs}\label{table:qual_comparison_opfs}
\begin{tabular}{ p{0.5 in}  p{1.9in} p{0.5 in} p{0.4in} p{0.6in}  p{1.2in} p{0.8in}}
  \toprule
  & \emph{Summary} & \emph{Penalty Term} & \emph{Workflow} & \emph{Decomposition Method} & \emph{Shared Variables} & \emph{Global Variable}\\
  \midrule 
  {ADMM \cite{boyd2011distributed,kim2000comparison}}   &  {Augmented Lagrangian is solved (with added penalty term to minimize computed shared boundary variables) for the decomposed sub-problems} & {quadratic} & {Sequential} & {Dual decomposition} & {Voltage magnitude, and active and reactive power flow} & {Is needed} \\
    \midrule
  {Proximal Message Passing (PMP) \cite{molzahn2017survey,erseghe2014distributed}}  &  {Augmented Lagrangian is solved (with added penalty term to minimize computed shared boundary variables) for the decomposed sub-problems} & {quadratic} & {Parallel} & {Dual decomposition} & {Voltage magnitude} &{Not needed} \\
  % \midrule
  % {Proximal Message Passing (PMP) \cite{molzahn2017survey,erseghe2014distributed}}  &  { ~}\newline \text{\hspace{0.4in}{  "}} & {quadratic} & {Parallel} & {Dual decompostion} & {Voltage} &{Not needed} \\
  % \midrule
  % {Modified ADMM \cite{peng2016distributed}}  &  { ~}\newline \text{\hspace{0.4in}{  "}} & {quadratic} & {Parallel} & {Dual decompostion} & {$P$ and $I$ with upstream area, $V$ with downstream areas} &{Not needed} \\
    \midrule
  {ALADIN \cite{peng2016distributed}}  &  {Augmented Lagrangian is solved for the decomposed sub-problems, however, primal and dual variable is updated by solving an additional quadratic consensus optimization problem for the boundaries using hessian and gradients} & {quadratic} & {Parallel} & {Dual decomposition} & {Voltage phasor (magnitude and angle), and active and reactive power flows} &{Not needed} \\
    \midrule
  {APP \cite{cohen1980auxiliary,baldick1999fast,kim2000comparison}}  &  {Augmented Lagrangian with added penalty terms is solved for the decomposed sub-problems} & {Linear} & {Parallel} & {Dual decomposition} & {Voltage magnitude and active and reactive power flows} &{Is needed} \\
  \midrule
  {Proposed Method (ENDiCo-OPF) }  &  {At every micro-iterations, decomposed sub-problems approximate the upstream and the downstream areas as a voltage source and fixed loads, respectively. The updated values of voltage and power flows (obtained from solving micro-iterations) are shared at each macro-iteration.} & {N/A} & {Parallel} & {Primal decomposition} & {Voltage magnitude and active and reactive power flows} &{Not needed} \\
  \bottomrule
\end{tabular}
\end{minipage}
\end{table*}

On the other hand, decomposition approaches based on the Augmented Lagrangian Method (ALM) and its variant, the Alternating Direction Method of Multipliers (ADMM), have been applied successfully to scale ACOPF problems for large feeders \cite{molzahn2017survey,boyd2011distributed,engelmann2018toward,gebbran2022multiperiod,inaolaji2022consensus}. In a series of early papers, Baldick et al. applied a linearized ALM to a regional decomposition of ACOPF \cite{KiBa1997,BaKiChLu1999,KiBa2000}. Peng and Low applied ADMM to certain convex relaxations of ACOPF on radial networks \cite{PeLo2014,PeLo2015,PeLo2016}. Computational efficiency of ADMM has been reported in practice for nonconvex ACOPF as well \cite{KoKiSo2005,ChKiHu2011,SuPhGh2013,Er2014,MhVeCh2019}, with convergence guarantees studied under certain technical assumptions \cite{Er2014,SuSu2021}. Along with computational advantages, the distributed methods can be used to coordinate the decisions of physically distributed agents, provide added robustness to single-point failure, and reduce communication overheads \cite{sadnan2021distributed}. Unfortunately, generic distributed optimization algorithms such as ADMM do not guarantee convergence for a general nonconvex optimization problem and may take a many message-passing rounds to converge to a local optimal solution. Specific to the D-OPF problem, the existing methods require a large number of message-passing rounds among the agents (on the order of $10^2$--$10^3$) to converge for a single-step optimization, which is not preferred from both distributed computing and distributed coordination standpoints \cite{erseghe2014distributed, dall2013distributed,millar2016smart,magnusson2015distributed}. When used for distributed coordination, many communication/message-passing rounds among distributed agents increases the time-of-convergence (ToC) and results in significant delays with decision-making. Some of these challenges are mitigated by distributed online controllers; however, they also take several time-steps to track the optimal decisions \cite{bolognani2014distributed, cavraro2017local,bernstein2019real,bastianello2020distributed,qu2019optimal,hu2019branch}.

To address these challenges, we recently proposed a distributed algorithm for the optimization of radial distribution systems based on the equivalence of networks principle \cite{sadnan2020real,sadnan2021distributed}. A qualitative comparison of the existing distributed algorithms with the proposed OPFs is included in Table \ref{table:qual_comparison_opfs}. The proposed approach solves the original non-convex OPF problem for power distribution systems using a novel decomposition technique that leverages the structure of the power flow problem. The primary distinction from the dual-decomposition approaches, is that the proposed method exploits the physics, i.e., the unique upstream/downstream relation among the power flow variables observed in a radial power distribution system. The use of problem structure in our distributed algorithm results in a significant reduction in the number of message-passing rounds needed to converge to an optimal solution by orders of magnitude ($\sim 10^2$). This results in significant advantages over generic application of distributed optimization techniques for distributed computing or distributed coordination in radial power distribution systems. However, our previous work requires solving a generic nonlinear optimization problem at each distributed node and does not provide any convergence guarantees.
 
The objective of this paper to develop a distributed optimization algorithm with convergence guarantees to solve D-OPF problems in a radial power distribution system.
Our decomposition approach is based on the structure of power flow problem in radial distribution systems and employs method of multipliers to solve the distributed subproblems exchanging specific power flow. Then, we present a comprehensive mathematical analysis on the convergence of the proposed approach and how it relates to the structural decomposition of the problem and problem-specific variables. Standard sufficiency conditions for optimality in nonlinear optimization \cite{Be1999} could be used to derive a set of conditions that guarantee convergence of local systems within a single iteration step. While the distributed nature stemming from the decomposition approach of our algorithm leads to its strong performances, the same setting poses considerable challenges to derive theoretical convergence guarantees over the entire network and also over time. As we employ a decomposition approach that solves local subsystems to optimality followed by communication rounds to achieve global convergence, we develop a similar strategy to derive guarantees for the same. To this end, we specify an additional condition on the convergence of voltage over time (Eqn.~\ref{eq:Deltacdn}) which when satisfied along with second order sufficient conditions for the local subsystem provide guarantees of its convergence over time. We then utilize the structure of the network to derive a set of conditions that guarantee convergence of a \emph{line network} in a sequential fashion starting from the root node and propagating the convergence down the line in subsequent iteration steps. Our analysis results in a relationship among power flow variables (which is trivially satisfied for a well-designed power distribution system) under which the proposed distributed optimization approach shows linear convergence. Finally, we validate the efficacy of the proposed approach by solving multiple distribution-level OPF problems. We also use simulations to provide additional insights into the convergence properties.

Our results present \emph{sufficient} conditions that guarantee convergence when satisfied.
  But we note that our conditions are somewhat different in structure from typical convergence guarantees for optimization algorithms.
  Our conditions are specified on the values of variables in the problem over one or more iterations, rather than on ranges of values of problem parameters.
  At the same time, these conditions are satisfied by the typical values taken by the system variables in power systems.
  Furthermore, our computational experiments (see Section \ref{sec:nmrclstud}) confirm the expected convergence behavior.

%\txrd{Talk about convergence for distributed algos for convex problems, e.g., ADMM.}

%\txrd{Summarize results.}
%\txrd{Summarize case studies.}

%To address these challenges, recently we have developed a distributed algorithm for the optimization of radial distribution systems based on the equivalence of networks principle \cite{sadnan2020real,sadnan2021distributed}. 

%\vspace{-0.15cm}

\section{Modeling and Problem Formulation}
In this paper $(\cdot)^{(t)}$ represents the variable at iteration step $t$,
$\underline{(.)}$ and $\overline{(.)}$ denote the minimum and maximum limit of any quantity, respectively,
and $\imi = \sqrt{-1}$.
We assume a radial single-phase power distribution network, where $\N$ and $\E$ denote the set of nodes and edges of the system.
Here, edge $ij \in \E$ identifies the distribution lines connecting the ordered pair of buses $(i,j)$ and is weighted with the series impedance of the line, represented by $r_{ij} + \imi x_{ij}$.
The set of load buses and DER buses are denoted by $\N_L$ and $\N_{D}$, respectively.
%In this section, first, we discuss the network model, and then the distributed controller is formulated.

\vspace{-0.2cm}
\subsection{Network and DER Model}
Let node $i$ be the unique parent node and node $k$ is the children node for the controllable node $j$, i.e., ${k:j \rightarrow k}$ where, $\{jk\} \in \E$.
% set $\N_{jk}=[k_1,k_2,.., k_n]$ represents the set of children nodes for node $j$.
We denote $v_j$ and $l_{ij}$ as the squared magnitude of voltage and current flow at node $j$ and in branch $\{ij\}$, respectively.
The network is modeled using the nonlinear branch flow equations \cite{baran1989optimal2} shown in Eqn.~\eqref{eqModel_nonlin}.
Here, $p_{L_j}+ \imi q_{L_j}$ is the load connected at node $j \in \N_L$, $P_{ij}, Q_{ij} \in \mathbb{R}$ are the sending-end active and reactive power flows for the edge $ij$, and $p_{Dj} + \imi q_{Dj}$ is the power output of the DER connected at node $j \in \N_{D}$.

\vspace*{-0.15in}
\begin{IEEEeqnarray}{C C}
%\small
\IEEEyesnumber\label{eqModel_nonlin} \IEEEyessubnumber*
P_{ij}-r_{ij}l_{ij}-p_{L_j}+p_{Dj}= \sum_{k:j \rightarrow k} P_{jk}   \label{eqModel_nonlin1}\\
Q_{ij}-x_{ij}l_{ij}-q_{L_j}+q_{Dj}= \sum_{k:j \rightarrow k} Q_{jk} \label{eqModel_nonlin2}\\
v_j=v_i-2(r_{ij}P_{ij}+x_{ij}Q_{ij})+(r_{ij}^2+x_{ij}^2)l_{ij}\label{eqModel_nonlin3}\\
v_il_{ij} = P_{ij}^2+Q_{ij}^2 \label{eqModel_nonlin4}
\end{IEEEeqnarray}

The DERs are modeled as Photovoltaic modules (PVs) interfaced using smart inverters, capable of two-quadrant operation. Both the reactive and the active power output can be optimally controlled. For reactive power control,
at the controllable node $j \in \N_D$, the real power generation by the DER, $p_{Dj}$ is assumed to be known (measured), and the controllable reactive power generation, $q_{Dj}$, is modeled as the decision variable.
With the rating of the DER connected at node $j \in \N_D$ denoted as $S_{DRj}$, the limits on $q_{Dj}$ are given by Eqn.~\eqref{eq:DG_lim_q}. Similarly for the active power control ($p_{Dj}$), equation \eqref{eq:DG_lim_p} can be modeled as the DER assuming reactive power output to be zero; further, for both reactive and active power control, equation \eqref{eq:DG_lim_pq} may be used to model such DERs. 
%
% \begin{equation}
% \IEEEyesnumber\label{DG_lim}
% %\small
% -\sqrt{S_{DRj}^2-p_{Dj}^2} \leq q_{Dj} \leq \sqrt{S_{DRj}^2-p_{Dj}^2} \label{DG_lim}\\
% \end{equation}
% }

\begin{IEEEeqnarray}{C}
\IEEEyesnumber\label{eq:DG_lim} \IEEEyessubnumber*
%\small
-\sqrt{S_{DRj}^2-p_{Dj}^2} \leq q_{Dj} \leq \sqrt{S_{DRj}^2-p_{Dj}^2} \label{eq:DG_lim_q}\\
0\leq p_{Dj} \leq S_{DR{j}} \label{eq:DG_lim_p}\\
p_{Dj}^2+q_{Dj}^2\leq S_{DRj}^2\label{eq:DG_lim_pq}
\end{IEEEeqnarray}

% \subsection{Defining Problem Objective}
% {\color{red} Should We define the problem objective in here?}
% \subsection{Distributed Real-Time Controller}

\subsection{Problem Formulation and Algorithm}

We recently~\cite{sadnan2020real} developed a real-time distributed controller to solve OPF problems by controlling the reactive power outputs of the DERs for distribution systems, and the method is a variant of nodal-level extension of previously developed distributed OPF \cite{sadnan2021distributed}.
In this section, we develop a similar approach for distributed OPF termed \emph{ENDiCo-OPF} by decomposing the overall problem at each node that is solved using the decomposition method developed previously in~\cite{sadnan2021distributed}. All controllable nodes in the system receive updated computed voltage and power flow quantities from their parent and children nodes, respectively, and in parallel calculate their optimum dispatches.
Briefly, each node $j \in \N_D$ solves a small-scale OPF problem defined by problem (\textbf{P1}) in Eqn.~\eqref{eq:P1}.
The reactive power dispatch $q_{Dj}$ is controlled to minimize some cost/objective function $\mathbf{f}$.
Note that the resulting nonlinear optimization problem is in five variables, $\{P_{ij}, Q_{ij}, v_{j}, l_{ij}, q_{Dj}\}$, with four equality and five inequality constraints.
Some examples of the cost function $\mathbf{f}$ include active power loss ($\mathbf {f} = r_{ij}l_{ij}$) and voltage deviation ($\mathbf {f} = (v_j-v_{\text{ref}})^2$), among others.
% At time-step $t$, controllable node $j$ receives node voltage $v_i^{(t-1)}$ from the parent node $i$, and power flows $P_{jk}^{(t-1)}+jQ_{jk}^{(t-1)}$ from all of the children nodes (Fig. \ref{fig:3ndW1child}).
The ENDiCO-OPF assumes the parent node voltage and the power flow to the children node to be constant and solves the problem (\textbf{P1}) locally for the reduced network.
In this paper, we denote the sub-problem \textbf{(P1)} by the function \textbf{$F$}, i.e., the argmin of this function corresponds to the optimal solution of the OPF problem \textbf{(P1)}.
Steps of ENDiCo-OPF are presented in Algorithm \ref{alg:ENDiCo-OPF}.
% Please note, since all the agents are solving OPF parallelly at the same time, we use time step (t) to represents the iteration stage here.
We work under the following standard assumption.

\begin{asmn} \label{asm:nodeagent}
All the nodes in the network have an agent that can measure its local power flow quantities (node voltages and line flows) and communicate with neighboring nodes.
\end{asmn}

We assume the small nonlinear optimization problem (P1) in \cref{eq:qDjStep3} is solved by a commercial solver, as we did in our earlier work~\cite{sadnan2020real}.
 This assumption does not affect the convergence analysis as it is just a small subproblem.
 Alternatively, we could present a subroutine for this subproblem, based on the Augmented Lagrangian Multiplier (ALM) method \cite{Be1999} for instance, which could make it easier to include the details of this step in the overall convergence analysis.
\clearpage
 
\vspace*{-0.5in}
%\begin{small}
\begin{IEEEeqnarray}{C}
%\small
\IEEEyesnumber\label{eq:P1} \IEEEyessubnumber*
\nonumber \text{\textbf{(P1)}}\hspace*{2.5in} \\ 
\min \hspace*{0.25in} \mathbf {f}^{(t)} \hspace*{2.5in} \label{eq:P1_obj}\\
\text{s.t.}\hspace{0.25in}~P_{ij}^{(t)}-r_{ij}l_{ij}^{(t)}-p_{L_j}^{(t)}+p_{Dj}^{(t)}= \sum_{k:j \rightarrow k} P_{jk}^{(t-1)}  \hspace*{0.2in} \label{eq:P1_rl}\\
\hspace*{0.2in} Q_{ij}^{(t)}-x_{ij}l_{ij}^{(t)}-q_{L_j}^{(t)}+q_{Dj}^{(t)}= \sum_{k:j \rightarrow k} Q_{jk}^{(t-1)} \label{eq:P1_qDmxl}\\
v_j^{(t)}=v_i^{(t-1)}-2(r_{ij}P_{ij}^{(t)}+x_{ij}Q_{ij}^{(t)})+(r_{ij}^2+x_{ij}^2)l_{ij}^{(t)}\hspace{0.4cm}\label{eq:P1_vjt}\\
l_{ij}^{(t)} = \frac{(P_{ij}^{(t)})^2+(Q_{ij}^{(t)})^2}{v_i^{(t-1)}} \label{eq:P1_lijt}\\
\underline{V}^2 ~\leq v_j^{(t)} ~\leq \overline{V}^2\label{eq:P1_vjtbds} \\
-\sqrt{S_{DR_j}^2 - p^2_{D_j}(t)} ~\leq q_{D_j}^{(t)} ~\leq \sqrt{S_{DR_j}^2 - p^2_{D_j}(t)} \hspace*{0.2in} \label{eq:P1_qDjtbds}\\
l_{ij}^{(t)} \leq \left(I^{\text{rated}}_{ij}\right)^2 \label{eq:P1_lijtub}
\end{IEEEeqnarray}
%\end{small}
%
Here, $\underline{V} = 0.95$ and $\overline{V} = 1.05$ pu are the limits on bus voltages, and $I^{\text{rated}}_{ij}$ is the thermal limit for the branch $\{ij\}$.

\begin{algorithm}[ht!]
\small
\caption{\small Equivalence of Network-based Distributed Controller for OPF (ENDiCo-OPF)}\label{alg:ENDiCo-OPF}
	%\noindent\rule{8.4cm}{1pt}\\ 
\SetAlgoLined
\SetKwInOut{Rx}{Receive}
\SetKwInOut{Tx}{Transmit}
\SetKwInOut{St}{Calculate}
\SetKwInOut{ND}{Node}
\SetKwInOut{TE}{iteration step }
\SetKwInOut{Stp}{ENDiCo-OPF Steps }
\ND {$\forall  j \in \N_D$}
\TE{$t$}
\St{$q^*_{Dj}$}
\Rx{$v_i^{(t-1)}$ and $P^{(t-1)}_{jk_i}+ \imi Q^{(t-1)}_{jk_i}$}
\Tx{$v_j^{(t)}$ and $P^{(t)}_{ij} + \imi Q^{(t)}_{ij}$ }
\vspace{-3pt}
\hrulefill \\
\Stp{}
\vspace{2pt}
%\hrulefill \\
\setcounter{AlgoLine}{0}
\ShowLn
Calculate $\sum P_{jk}^{(t-1)} + \imi Q_{jk}^{(t-1)}$ from all the $P^{(t-1)}_{jk_i} + \imi Q^{(t-1)}_{jk_i}$, received from child nodes $k_i \in \N_{jk}$ \\
\vspace{2pt}
\ShowLn
Approximate the upstream and downstream network of line $(i,j)$ with fixed value of $v_i^{(t-1)}$ and $\sum P_{jk}^{(t-1)} + \imi Q_{jk}^{(t-1)}$ \\
\vspace{2pt}
\ShowLn
Solve optimization problem (P1) for iteration step (t), i.e.,  \label{st:optP1}
\begin{equation} \label{eq:qDjStep3}
   q_{Dj}^* =  \underset{q_{Dj}}{\arg\min} \hspace{0.2cm} \mathbf {F}^{(t)}(q_{Dj})
\end{equation}
%\vspace{-1pt}
%\ShowLn
Set Reactive power output $q^{(t)}_{Dj} = q_{Dj}^*$\\
\vspace{2pt}
% \ShowLn
% Implement the set point $q^{(t)}_{Dj}$ at node $j$\\
% \vspace{2pt}
\ShowLn
Calculate the node voltage $v_j^{(t)}$ at node $j$ and complex power flow $P^{(t)}_{ij} + \imi Q^{(t)}_{ij}$ in the line $(i,j)$\\
\vspace{2pt}
\ShowLn
Sends $v_j^{(t)}$ and $P^{(t)}_{ij} + \imi Q^{(t)}_{ij}$ to child node $k$ and parent node $i$, respectively\\
\vspace{2pt}
\ShowLn
Receives $v_i^{(t)}$ and $P^{(t)}_{jk_i} + \imi Q^{(t)}_{jk_i}$ from parent and child nodes, respectively\\
\vspace{2pt}
\ShowLn
Move forward to the next iteration step $(t+1)$
\vspace{4pt}

%	\noindent\rule{8.4cm}{1pt}\\
	\label{algo}
	% \vspace{-0.1cm}
\end{algorithm}

For presenting the main results on convergence of ENDiCo-OPF, we first write the optimization model (P1) in Eqn.~\eqref{eq:P1} in standard form.
We set the local variables of the bus $j$ at iteration step $t$ as
\begin{equation} \label{eq:zj}
%\small
  \bz =  \begin{bmatrix} P_{ij}^{(t)} & Q_{ij}^{(t)} & v_{j}^{(t)} & l_{ij}^{(t)} & q_{D_j}^{(t)} \end{bmatrix}^T.
\end{equation}
We also write the set of four equality constraints in Eqns.~\ref{eq:P1_rl}--\ref{eq:P1_lijt} as $\A_p(\bz)=0$ for $p=1$--$4$ and the five inequality constraints in Eqns.~\ref{eq:P1_vjtbds}--\ref{eq:P1_lijtub} as $\B_{r}(\bz)\leq 0$ for $r=1$--$5$.
Finally, setting $R_{ij} = [0~0~0~r_{ij}~0]^T$ we rewrite the original system as follows.
\begin{equation} \label{eq:P1'} \tag{P1'} 
  \begin{array}{llcll}
    \min & f(\bz)  = R_{ij}^T \bz \\
    \text{s.t.} & \A_p(\bz) & = & 0 & p=1,\dots,4 \\
                & \B_r(\bz) & \leq & 0 & r=1,\dots,5
  \end{array}
\end{equation}

Please note that only reactive power output of the DERs, $q_{D_j}$, are assumed to be the controllable variables in the OPF formulation for the convergence analysis.
However, the analysis is extendable for the active power controls, $p_{D_j}$, as well.

\section{Convergence Analysis} \label{sec:cnvgcanal}

We present theoretical guarantees for the convergence of the ENDiCo-OPF algorithm under certain standard assumptions on the network topology and structural properties of the distribution system.
We use the method of Lagrangian multipliers from nonlinear optimization coupled with techniques from linear algebra to derive the convergence property of ENDiCo-OPF.
We (i) first study the convergence of the simplest structure of a distributed network for a single iteration step (\cref{ssec:localcvgnc}).
As a first step toward generalizing this property to multiple iteration steps as well as to general networks, we present a slight modification of the ENDiCo-OPF algorithm using a new convergence parameter $\D$ (\cref{ssec:subroutine}).
We (ii) derive conditions that guarantee the convergence of the $\D$-ENDiCo-OPF algorithm for a local system over iteration steps (\cref{ssec:lclcvgnctime}).
Then we (iii) generalize the problem to achieve a network-level convergence guarantee for line networks (\cref{ssec:globalcvgnc}).
We introduce auxiliary variables to measure the difference of variable values between the adjacent iteration steps in the process of deriving guarantees for global convergence of ENDiCo-OPF over time for line networks.

%\vspace*{-0.1in}
\subsection{Local System Convergence Guarantees}  \label{ssec:localcvgnc}

We first study the convergence of the ENDiCo-OPF algorithm at a local level on a \emph{subsystem} under Assumption \ref{asm:nodeagent}, which refers to a system with only one communication layer that contains a middle node to receive voltages from a parent node as well as power flows from its children nodes (\cref{fig:single}).
The system consists of nodes $\{i,j,k\}$ when there is a single child node, or nodes $\{i,j,k_1,\dots,k_l\}$ in general when there are $l$ child nodes.
We assume that the system conditions are changing at a slower rate than the decision variables.
\begin{figure}[ht!]
  \centering
  \begin{subfigure}{0.99\columnwidth}
    \centering
    \includegraphics[width=0.85\columnwidth]{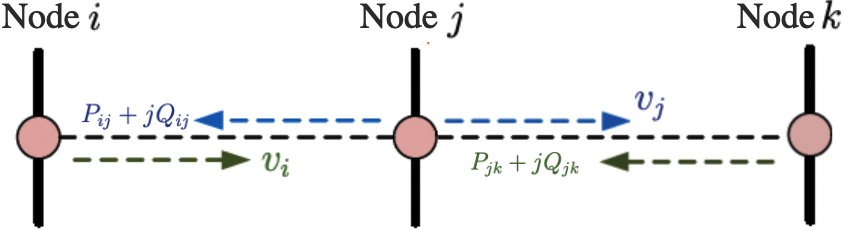}
    \caption{3-node subsystem with single child node} \label{fig:3ndW1child}
    \medskip
  \end{subfigure}
  \begin{subfigure}{0.99\columnwidth}
    \centering
    \includegraphics[width=0.85\columnwidth]{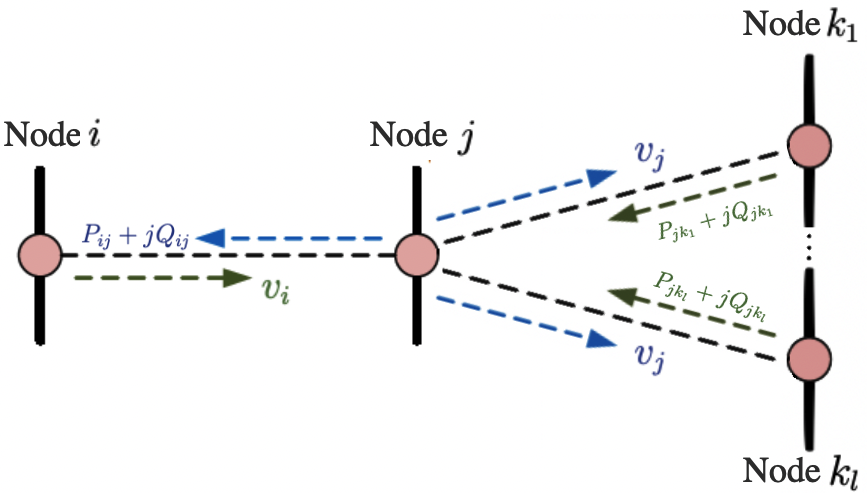}
    \caption{3-node subsystem with multiple child nodes} \label{fig:3ndWmltchild}
  \end{subfigure}
  \vspace*{-0.1in}
  \caption{Two instances of single subsystem networks}   \label{fig:single}
  \vspace*{-0.1in}
\end{figure}

We introduce auxiliary variables $\varphi_r$ to convert the inequality constraints in the system \eqref{eq:P1'} to equations.
\begin{equation} \label{eq:P2} \tag{P2} 
%\small
  \begin{array}{llcll}
    \min & f(\bz)  = R_{ij}^T \bz \\
    \text{s.t.} & \A_p(\bz) & = & 0 & p=1,\dots,4 \\
                & \B_r(\bz) + \varphi_r^2 & = & 0 & r=1,\dots,5
  \end{array}
\end{equation}

We apply second order sufficiency conditions \cite[\S3.2]{Be1999} that guarantee when $\bz^*$ is a strict local minimum of the objective function $f$ in the system (\ref{eq:P2}) under standard assumptions.

\begin{prop}[\cite{Be1999}, Proposition 3.2.1] \label{prop:sufcdnopt}
  Assume $f$ and $\A_p, \B_r$ in the System (\ref{eq:P2}) are twice continuously differentiable for all $p,r$ and let $\bz^* \in \mathbb{R}^n, \bla^*\in \mathbb{R}^p$, and $\bmu^*\in \mathbb{R}^r$  satisfy
  \begin{align*}
    \nabla_{\bz} L(\bz^*,\bla^*,\bmu^*)=0,~
    \nabla_{\bla} L(\bz^*, \bla^*, \bmu^*)=0, \hspace*{0.5in}\\
    \nabla_{\bmu} L(\bz^*, \bla^*, \bmu^*)=0, \text{ and }  \hspace*{1.5in} \\
    a^T\nabla_{\bz\bz}L(\bz^*,\bla^*,\bmu^*)a>0
    \text{ for all } a\neq 0 \text{ with } \hspace*{0.42in} \\
    \begin{bmatrix} \nabla\A(\bz^*) & \nabla\B(\bz^*) \end{bmatrix}^T a=0.
\hspace*{1in}
  \end{align*}
  Then $\bz^*$ is a strict local minimum of $f$.
\end{prop}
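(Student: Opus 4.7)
The plan is to give the classical contradiction-plus-Taylor argument that underlies the standard second-order sufficient condition in Bertsekas. First I would recast (P2) as a pure equality-constrained program in the augmented variable $(\bz,\bphi)$ with constraints $\A_p(\bz)=0$ and $G_r(\bz,\bphi)=\B_r(\bz)+\varphi_r^2=0$, and form the Lagrangian $L(\bz,\bphi,\bla,\bmu)=f(\bz)+\bla^T\A(\bz)+\bmu^T G(\bz,\bphi)$. The hypothesized first-order conditions $\nabla_{\bla}L=\nabla_{\bmu}L=0$ give primal feasibility at $(\bz^*,\bphi^*)$, while $\nabla_{\bphi}L=0$ yields the complementary-slackness identity $\mu_r^*\,\varphi_r^*=0$, so any inactive inequality ($\B_r(\bz^*)<0$, i.e., $\varphi_r^*\neq 0$) has $\mu_r^*=0$ and can be dropped from the second-order analysis.

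Second, I would argue by contradiction: suppose $\bz^*$ is not a strict local minimum of $f$ on the feasible set of (P1$'$). Then there exists a sequence of feasible points $\bz^k\neq\bz^*$ with $\bz^k\to\bz^*$ and $f(\bz^k)\le f(\bz^*)$. Writing $\bz^k=\bz^*+t_k d_k$ with $t_k=\|\bz^k-\bz^*\|\to 0^+$ and $\|d_k\|=1$, compactness of the unit sphere delivers a subsequence along which $d_k\to d$ for some unit vector $d$. A first-order Taylor expansion of the active constraints at $\bz^*$ gives $0=\A_p(\bz^k)=t_k\nabla\A_p(\bz^*)^T d_k+o(t_k)$ and similarly for the active $\B_r$; dividing by $t_k$ and passing to the limit shows $[\nabla\A(\bz^*)\ \nabla\B(\bz^*)]^T d=0$, so $d$ lies in the tangent subspace on which positive definiteness is assumed. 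Inactive inequalities remain strict near $\bz^*$ and impose no condition on $d$.

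Third, I would compare the Lagrangian values. Since $\bz^k$ is feasible and the inactive multipliers vanish, $L(\bz^k,\bla^*,\bmu^*)=f(\bz^k)$ and $L(\bz^*,\bla^*,\bmu^*)=f(\bz^*)$. Using $\nabla_{\bz}L(\bz^*,\bla^*,\bmu^*)=0$, a second-order Taylor expansion in $\bz$ yields
\[
f(\bz^k)-f(\bz^*)=\tfrac{1}{2}t_k^2\,d_k^T\nabla_{\bz\bz}L(\bz^*,\bla^*,\bmu^*)\,d_k+o(t_k^2).
\]
Dividing by $t_k^2$ and letting $k\to\infty$, the assumption $f(\bz^k)\le f(\bz^*)$ forces $d^T\nabla_{\bz\bz}L(\bz^*,\bla^*,\bmu^*)\,d\le 0$, which contradicts strict positive definiteness of the Lagrangian Hessian on the tangent subspace. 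Hence no such sequence exists and $\bz^*$ is a strict local minimum.

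The main obstacle, and the only place that requires care, is the active/inactive split of the inequality constraints $\B_r$. One must verify that inactive constraints play no role (their multipliers are zero by complementary slackness, and perturbations remain strictly feasible for them) so that the Hessian condition need only be checked against directions $d$ annihilating the Jacobian of $\A$ together with the \emph{active} $\B_r$. Once this is handled, the remainder is a textbook compactness-plus-Taylor argument and mirrors the proof of Proposition 3.2.1 in \cite{Be1999}; invoking that result directly is also a legitimate shortcut since its hypotheses are exactly those stated here.
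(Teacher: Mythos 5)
The paper does not actually prove this proposition: it is imported verbatim (with notation adapted to System (P2)) from Bertsekas, Proposition 3.2.1, and used as a black box in the proof of Theorem \ref{thm:subsyscvgce}. Your reconstruction is the standard compactness-plus-Taylor argument and is, in outline, exactly the proof of the cited result, so invoking the citation directly---as you note at the end---is the route the paper takes. Two steps in your write-up need tightening if the proof is to stand on its own. First, when you extract the limit direction $d$ from a feasible sequence of (P1$'$), an \emph{active} inequality constraint only gives $\B_r(\bz^k)\le 0=\B_r(\bz^*)$, hence $\nabla\B_r(\bz^*)^T d\le 0$, not $=0$ as your ``similarly for the active $\B_r$'' asserts; to force equality you must either carry the slack variables through the expansion (the constraint $\B_r(\bz)+\varphi_r^2=0$ of (P2) is a genuine equality, and at an active constraint $\varphi_r^*=0$ kills the first-order term in $\varphi$, leaving $\nabla\B_r(\bz^*)^T d=0$), or combine $\nabla_{\bz}L=0$, $f(\bz^k)\le f(\bz^*)$, and $\bmu^*\ge\bzero$ to conclude $\mu_r^*\,\nabla\B_r(\bz^*)^T d=0$, which is enough to kill the offending cross terms. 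Second, $L(\bz^k,\bla^*,\bmu^*)=f(\bz^k)$ should be $L(\bz^k,\bla^*,\bmu^*)\le f(\bz^k)$ in the (P1$'$) formulation, since a constraint active at $\bz^*$ need not be active at $\bz^k$; the inequality is all the contradiction requires. Neither issue is fatal---both are resolved in the textbook proof you are mirroring, and in the paper's application the Hessian is shown positive definite on the whole space so the tangent-subspace bookkeeping is moot---but as written the active-constraint step is the one place your argument does not quite close.
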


We now present the theorem that specifies a basic condition under which the ENDiCo-OPF algorithm is guaranteed to converge for a single iteration step on single subsystem networks as shown in \cref{fig:single}.
We show that this condition always holds for power systems operating under standard settings.
To specify the result, we consider the augmented Lagrangian function for the System (\ref{eq:P2}) for penalty parameter $c > 0$:
\begin{equation} \label{eq:augLggnP2}
\small
  \begin{aligned}
    \hspace*{-0.1in} L_c(\bz, \bphi, \bla, \bmu) & = f(\bz)+ \bla \A(\bz) + \frac{c}{2}\|\A(\bz)\|^2 ~+ \\
    & ~~~~\bmu(\B(\bz)+\bphi^2)+\frac{c}{2}\|\B(\bz)+\bphi^2\|^2 \,.
  \end{aligned}
\end{equation}

We first show that Lagrangian multipliers in Proposition \ref{prop:sufcdnopt} do exist.
Note that \eqref{eq:P2} is a convex optimization problem, and hence the strict local minimum $\bz^*$ in Proposition \ref{prop:sufcdnopt} will also be the global optimal solution.
We provide a proof similar in nature to the arguments presented by Andreani et al.~\cite{andreani2011sequential} 
\begin{lemma} \label{lem:lgrngexist}
  There exist Lagrangian multipliers $\bla^*, \bmu^*$ that along with the system variables satisfy the conditions in Proposition \ref{prop:sufcdnopt} for the System (\ref{eq:P2}).
\end{lemma}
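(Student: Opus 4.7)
The plan is to follow the sequential optimality approach pointed to by the authors' reference to Andreani et al.: construct the required multipliers as limits of approximate ones coming from a family of penalized subproblems, rather than invoking a constraint qualification outright. Since (P2) is convex and $\bz^*$ is its global minimum, the analysis can be localized to a small neighborhood of $(\bz^*,\bphi^*)$ where continuity and compactness are at our disposal.

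First I would introduce, for each $k \geq 1$, the regularized penalty problem
\[
\min_{\bz,\bphi}\, f(\bz) + \tfrac{k}{2}\|\A(\bz)\|^2 + \tfrac{k}{2}\|\B(\bz) + \bphi^2\|^2 + \tfrac{1}{2}\|\bz - \bz^*\|^2 + \tfrac{1}{2}\|\bphi - \bphi^*\|^2,
\]
where the quadratic term pins the minimizer $(\bz^k,\bphi^k)$ in a compact set around $(\bz^*,\bphi^*)$. A standard penalty-function argument then shows $(\bz^k,\bphi^k) \to (\bz^*,\bphi^*)$ as $k \to \infty$. Writing the first-order conditions of this unconstrained penalty problem and defining
\[
\bla^k := k\,\A(\bz^k), \qquad \bmu^k := k\bigl(\B(\bz^k) + (\bphi^k)^2\bigr),
\]
one gets $\nabla_{\bz} L(\bz^k,\bla^k,\bmu^k) \to 0$ and $\nabla_{\bphi} L(\bz^k,\bla^k,\bmu^k) \to 0$ automatically.

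The critical step is to prove boundedness of $\{(\bla^k,\bmu^k)\}$. I would argue by contradiction: if $s_k := \|(\bla^k,\bmu^k)\| \to \infty$, then normalizing and extracting a convergent subsequence yields a nonzero $(\bar\bla,\bar\bmu)$ with $\nabla\A(\bz^*)^T\bar\bla + \nabla\B(\bz^*)^T\bar\bmu = 0$ and the appropriate sign restrictions on $\bar\bmu$ at active inequalities. Such a relation would amount to a linear dependence among the active constraint gradients at $(\bz^*,\bphi^*)$. For the three-node subsystems of \cref{fig:single}, each of the four equality constraints introduces a structurally distinct combination of the local variables in $\bz = (P_{ij}, Q_{ij}, v_j, l_{ij}, q_{Dj})^T$, and the slacks $\varphi_r$ separate the rows of $\nabla(\B + \bphi^2)$. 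A direct Jacobian computation should therefore verify LICQ at $(\bz^*,\bphi^*)$ under nondegenerate system conditions, ruling out the putative dependence and forcing the multiplier sequence to remain bounded.

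The main obstacle will be handling the bilinear branch-flow equation $v_i^{(t-1)} l_{ij} = P_{ij}^2 + Q_{ij}^2$ together with any inequality rows that happen to be active at $\bz^*$ ($\varphi_r^* = 0$), since at such rows the slack-variable trick no longer supplies an automatic independent column. If strict LICQ fails in a degenerate configuration, I would fall back on the weaker constant positive linear dependence (CPLD) condition used by Andreani et al., which still guarantees boundedness of the multiplier sequence and is compatible with the sign restrictions on $\bmu$. Once boundedness is in hand, extracting a convergent subsequence yields the candidate $(\bla^*,\bmu^*)$; passing to the limit in the penalty first-order conditions and in the primal feasibility relations $\A(\bz^k) \to 0$ and $\B(\bz^k)+(\bphi^k)^2 \to 0$ gives $\nabla_{\bz} L(\bz^*,\bla^*,\bmu^*) = 0$, $\nabla_{\bla} L = 0$, and $\nabla_{\bmu} L = 0$. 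The second-order positivity on the tangent cone required by Proposition~\ref{prop:sufcdnopt} then follows from convexity of (P2), as already anticipated in the paragraph preceding the lemma.
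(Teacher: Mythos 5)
Your proposal is correct and follows essentially the same route as the paper's proof: a localized, proximally regularized penalty subproblem whose minimizers converge to $\bz^*$, first-order conditions yielding approximate multiplier sequences, and a normalization-and-contradiction argument that bounds those sequences via linear independence of the constraint gradients at $\bz^*$. Your version is slightly more explicit in places (quadratic rather than linear penalty terms, closed-form multiplier estimates $\bla^k = k\,\A(\bz^k)$, and the CPLD fallback), but these are refinements of, not departures from, the paper's argument.
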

\begin{proof}
  Let $\gamma>0$ be such that $\bz^*$ is the unique global solution of the following version of the problem in System (\ref{eq:P2}) where $\mathbf{B}(\bz^*,\gamma)$ is the $\gamma$-ball centered at $\bz^*$.
  \begin{equation} \label{eq:P2zg} \tag{P2-$\gamma$} 
    %\small
    \begin{array}{ll}
      \min & f(\bz) + (1/2) \|\bz-\bz^*\|_2^2 \\
      \text{s.t.} & \A_p(\bz)~~~~~~~~ = 0,~ p=1,\dots,4 \\
      & \B_r(\bz) + \varphi_r^2 ~= 0,~r=1,\dots,5 \\
      & \bz \in \mathbf{B}(\bz^*,\gamma)
    \end{array}
  \end{equation}
  We consider the sequence of penalty subproblems associated with (\ref{eq:P2zg}) for $k \in \mathbb{N}$ (natural numbers) with the penalty parameter $\rho_k$ assumed to be large enough:
  \begin{equation} \label{eq:P2phi} \tag{P2-$\phi$}
    \hspace*{-0.15in}
    \begin{array}{l}
      \min\limits_{\bz \in \mathbf{B}(\bz^*,\gamma)}~~\phi(\bz) := f(\bz) + (1/2) \|\bz-\bz^*\|_2^2 + \\
      \hspace*{0.5in} \rho_k \left(\sum_{p=1}^4 \A_p(\bz) + \sum_{r=1}^5(\B_r(\bz) + \varphi_r^2) \right) 
    \end{array}
  \end{equation}
  By the continuity of $\phi(\cdot)$ and the compactness of $\mathbf{B}(\bz^*,\gamma)$, as $\rho_k\to \infty$, $\bz^k$ is well-defined for all $k\in \mathbb{N}$ such that $\bz^k$ is the global solution of (\ref{eq:P2phi}).
  By the boundedness of $\{\bz^k\}_{k\in\mathbb{N}}$, we have that
  \begin{align*}
    f(\bz^k) + (1/2)\|\bz^k-\bz^*\|^2_2\leq \phi(\bz^k)\leq \phi(\bz^*) = f(\bz^*)
  \end{align*}
  such that the limit point of $\bz^k$, denoted by $\bzb$ satisfies
  \begin{align*}
    f(\bzb) + (1/2) \|\bzb - \bz^*\|^2_2 \leq f(\bz^*)
  \end{align*}
  which can be fulfilled, due to the continuity and boundedness of $\phi(\cdot)$, only by making $\A(\bzb)=0$ and $\B(\bzb) + \boldsymbol{\varphi}^2=0$.
  Hence, with $\bzb \in  \mathbf{B}(\bz^*,\gamma)$, we get that $\bzb$ globally solves the problem  (\ref{eq:P2zg}),
  which further implies that $\bzb = \bz^*$ due to the uniqueness of the global solution.
  Hence for some large enough $k \in \mathbb{N}$, we have that $\bz^k$ globally solves the unconstrained minimization problem with objective function $\phi(\cdot)$.
  Hence we get that $\nabla \phi(\bz^k) = 0$ for large $k$, which gives
  \begin{equation}\label{eq:gradphi}
    \begin{array}{c}
      \nabla f(\bz^k) + \bz^k - \bz^* + \sum_{p=1}^4\lambda^k_p \nabla\A_p(\bz^k) \hspace*{0.5in} \\
      + \sum_{r=1}^5\mu^k_i(\nabla\B_r(\bz^k)+\varphi_r^2)=0
    \end{array}
  \end{equation}
  with the the sequences of Lagrangian multipliers $\{\lambda^k_p\}_{k\in\mathbb{N}},$ and $\{\mu^k_r\}_{k\in\mathbb{N}}$.
  We argue that both $\{\bla^k\}$ and $\{\bmu^k\}$ are bounded for $k\in \mathbb{N}$.
  Assume that at least one sequence is unbounded, and let $\mathcal{L}^k$ represent the maximum norm of $\bla^k_p, p=1,\dots,4$ and $\bmu^k_r, r= 1,\dots, 5$,
  Dividing (\ref{eq:gradphi}) by $\mathcal{L}^k$  and taking the limits along convergent subsequences, we get that $\mathcal{L}^k\to \infty$ and
  \begin{align}\label{eq:zk}
    \bla^* \nabla\A(\bz^*) + \bmu^*(\nabla\B(\bz^*)+\boldsymbol{\varphi}^{*2})=0
  \end{align}
  where $\|\bla^*\|, \|\bmu^*\|\geq 0$.
  However, since $\nabla \A$ and $\nabla\B + \boldsymbol{\varphi}^{2}$ are all linearly independent at $\bz^*$, we get a contradiction.
  Hence we get the boundedness of the two sequences. 
\end{proof}    

\begin{thm} \label{thm:subsyscvgce}
  Assume $f$ and $\A_p, \B_r$ in the System (\ref{eq:P2}) are twice continuously differentiable for all $p,r$ and there exists a threshold penalty parameter $\bc$ in the augmented Lagrangian (\cref{eq:augLggnP2}).
  If
  \begin{equation} \label{eq:maincdn}
    v_i^{(t-1)}-4\p^{(t)}\ri^{(t)}-4\q^{(t)}\x^{(t)}>0,
  \end{equation}
  then the ENDiCo-OPF algorithm converges for a single iteration step $t$ for all $c > \bc$. 
\end{thm}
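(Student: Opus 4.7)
The plan is to verify the second-order sufficient conditions of Proposition \ref{prop:sufcdnopt} for the reformulated system (\ref{eq:P2}) at the optimal point $\bz^*$, and then invoke the classical convergence theorem for the augmented Lagrangian / method of multipliers (Bertsekas \cite{Be1999}) to obtain the threshold $\bc$ and the corresponding local convergence for all $c > \bc$. Lemma \ref{lem:lgrngexist} already supplies the multipliers $\bla^*,\bmu^*$, and the stationarity and primal-feasibility conditions $\nabla_{\bz}L = \nabla_{\bla}L = \nabla_{\bmu}L = 0$ required in Proposition \ref{prop:sufcdnopt} are immediate from the KKT system at $\bz^*$. The entire substance of the theorem is therefore concentrated in verifying the positivity of $\nabla_{\bz\bz}L$ on the tangent space of the active constraint manifold at $\bz^*$.

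First, I would compute the Hessian. Since $f(\bz) = r_{ij}l_{ij}$ is linear, and $\A_1,\A_2,\A_3$ as well as all of $\B_1,\dots,\B_5$ are linear in $\bz$, only the nonconvex branch-flow constraint $\A_4$ (encoding $v_i^{(t-1)}l_{ij}=P_{ij}^2+Q_{ij}^2$ with $v_i^{(t-1)}$ held fixed from the previous iteration) contributes second derivatives. In the ordering $\bz = (P_{ij}, Q_{ij}, v_j, l_{ij}, q_{Dj})$ this gives
\[
  \nabla_{\bz\bz} L(\bz^*,\bla^*,\bmu^*) = -2\lambda_4^*\,\mathrm{diag}(1,1,0,0,0).
\]

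Second, I would parametrize the tangent space. In the generic interior case (no inequality constraint active at $\bz^*$), the four linear equations $\nabla \A_p^T a = 0$ in five unknowns leave a one-dimensional tangent direction, whose components $a_P$ and $a_Q$ are rational expressions with denominator $v_i^{(t-1)} - 2P_{ij}r_{ij} - 2Q_{ij}x_{ij}$ coming from $\nabla \A_4^T a = 0$. The stationarity condition $\partial L/\partial l_{ij} = 0$ also expresses $\lambda_4^*$ in terms of the same quantity $v_i^{(t-1)} - 2P_{ij}r_{ij} - 2Q_{ij}x_{ij}$ together with the other multipliers. Substituting both into the quadratic form $a^T \nabla_{\bz\bz}L\, a = -2\lambda_4^*(a_P^2 + a_Q^2)$ and simplifying, the positivity criterion collapses to the single inequality \eqref{eq:maincdn}; the doubling of the coefficient to $4$ (from the naively expected $2$) emerges from composing the two occurrences of $v_i^{(t-1)} - 2P_{ij}r_{ij} - 2Q_{ij}x_{ij}$, one appearing in the expression for $\lambda_4^*$ and the other in the tangent direction.

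The main obstacle I anticipate is handling the non-generic case in which one or more of the inequality constraints \eqref{eq:P1_vjtbds}--\eqref{eq:P1_lijtub} is active at $\bz^*$. Each active constraint shrinks the tangent space and, via the slack variable $\varphi_r = 0$, contributes a $2\mu_r^* > 0$ term along its slack coordinate to the quadratic form; I would argue that these contributions can only strengthen the positivity already established in the interior case, so that \eqref{eq:maincdn} remains the governing sufficient condition. Once second-order sufficiency is in hand, the existence of the threshold $\bc$ and convergence for all $c > \bc$ follow by a direct application of the classical augmented-Lagrangian convergence theorem, whose hypotheses are precisely the sufficient conditions just verified.
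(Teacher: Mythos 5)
Your skeleton matches the paper's: both proofs reduce the theorem to verifying the second-order sufficiency condition of Proposition~\ref{prop:sufcdnopt} at $\bz^*$, using Lemma~\ref{lem:lgrngexist} for the multipliers, and both correctly observe that only the quadratic constraint $\A_4$ contributes curvature. But the route you choose for the key step --- restricting the ordinary Lagrangian Hessian to the one-dimensional tangent space --- is different from the paper's, which instead establishes positive definiteness of the \emph{augmented} Hessian $\nabla^2_{\bz\bz}L_c = d_j\,\mathrm{diag}(2,2,0,0,0) + cM$ on the whole of $\mathbb{R}^5$ by checking all five leading principal minors of the $5\times 5$ matrix $M'$, with condition \eqref{eq:maincdn} entering through the $4\times 4$ and $5\times 5$ minors. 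Your route would be cleaner if it worked out, but as written it has a genuine gap: the entire content of the theorem is the claim that the positivity criterion ``collapses to'' \eqref{eq:maincdn}, and you never carry out that computation. Worse, the specific assertions you make about how it would resolve are not reliable. With your Hessian $-2\lambda_4^*\,\mathrm{diag}(1,1,0,0,0)$ and a tangent direction that necessarily has nonzero $(a_P,a_Q)$ components, positivity of $a^T\nabla^2_{\bz\bz}L\,a$ forces $\lambda_4^*<0$; the paper works with $d_j=\lambda_4^*>0$ and a positive diagonal contribution, so either your sign convention for $\A_4$ or your claimed sign of the multiplier must be pinned down and reconciled with stationarity --- you gesture at this but do not do it. Your explanation of the factor $4$ is also not correct as stated: composing two occurrences of $v_i^{(t-1)}-2\p\ri-2\q\x$ yields $(v_i^{(t-1)}-2\p\ri-2\q\x)^2$, which equals $v_i^{(t-1)}(v_i^{(t-1)}-4\p\ri-4\q\x)+4(\p\ri+\q\x)^2$, not the expression in \eqref{eq:maincdn}; the sign of the square tells you nothing, so the emergence of \eqref{eq:maincdn} has to come from a genuine cancellation that you have not exhibited.

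Two smaller points. First, your dismissal of the active-inequality case (``these contributions can only strengthen the positivity'') is an assertion, not an argument: an active constraint both shrinks the tangent space \emph{and} changes which multipliers appear in the stationarity relation you use to determine $\lambda_4^*$, so it is not automatic that \eqref{eq:maincdn} remains the governing condition. Second, your appeal to the classical augmented-Lagrangian convergence theorem to produce $\bc$ is actually more principled than the paper, which simply hypothesizes the existence of $\bc$; that part of your plan is fine. To close the gap you would need to (i) write out $\nabla\A_4$ and the stationarity equation $\partial L/\partial l_{ij}=0$ explicitly, solve for $\lambda_4^*$ and its sign, (ii) solve the four linear equations $\nabla\A_p^Ta=0$ for the tangent direction, and (iii) show that $-2\lambda_4^*(a_P^2+a_Q^2)/v_i^{(t-1)}>0$ is implied by \eqref{eq:maincdn}. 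Until that algebra is done, the proposal is a plan rather than a proof.
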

Before presenting the proof, we note that the main condition in \cref{eq:maincdn} holds for typical values that the variables take in power systems.
At the same time, this is a sufficient condition---local convergence could happen even when \cref{eq:maincdn} does not hold. 

\begin{proof}
  We obtain this result from the application of Proposition \ref{prop:sufcdnopt}.
  We first consider the gradient of the Lagrangian function in \cref{eq:augLggnP2} with respect to $\bz$.
  We use the equations specifying the constraints in the System (\ref{eq:P2}) to simplify the expression for the gradient,
  and for the sake of brevity, we suppress the subscript of $\bz$ of $\nabla$ for the terms on the right-hand side.
  \begin{equation} \label{eq:gradLggn}
  \small
    \hspace*{-0.1in}
    \begin{aligned}
      \nabla_{\bz} L_c(\bz, \bphi, \bla, \bmu) = \nabla f(\bz) + \nabla\A(\bz)(\bla+c\A(\bz))\\
       + \nabla\B(\bz)(\bmu+c(\B(\bz)+\bphi^2))~~\\
       =\nabla f(\bz)+\bla\nabla\A(\bz)+\bmu\nabla\B(\bz) 
    \end{aligned}
  \end{equation}
  Lemma \ref{lem:lgrngexist} ensures that there exist Lagrangian multipliers $\bla^*, \bmu^*$ and variables $\bz^*, \bphi^*$ that satisfy $\nabla_{\bz} L_c(\bz^*, \bphi^*, \bla^*, \bmu^*) = 0$.

  The main technical work is involved in specifying the structure of the Hessian such that Proposition \ref{prop:sufcdnopt} will hold.
  To this end, we first note that the last sufficient condition specified in  Proposition \ref{prop:sufcdnopt} can be shown to hold by equivalently showing that the Hessian $\nabla^2_{\bz\bz}L_c$ is positive definite.
  We assume the iteration step $t$ is fixed, use the expression for $\bz$ in \cref{eq:zj}, suppress the superscripts $(t)$ of variables, let $v_i$ represent $v_i^{(t-1)}$, and let $d_j=\lambda_4^*$ to get the Hessian as
  \begin{align} 
    \nabla^2_{\bz\bz} L_c(\bz, \bphi, \bla, \bmu) = d_{j}
    \begin{bmatrix}
      2 & 0 & 0 & 0 & 0 \\
      0 & 2 & 0 & 0 & 0 \\
      0 & 0 & 0 & 0 & 0 \\
      0 & 0 & 0 & 0 & 0 \\
      0 & 0 & 0 & 0 & 0 \\
    \end{bmatrix}
    + cM \label{eq:HsnwM}
  \end{align} 
  where $M = $
  %\vspace*{-0.4in}
  %\footnotesize
  \begin{small}
  \[
    \hspace*{-0.08in}
    \begin{array}{l}
      \left[
        \hspace*{-0.07in}
        \begin{array}{cc}
          4\ri^2+4\p^2+1 & 4\ri\x+4\p\q  \\
          4\ri\x+4\p\q & 4\x^2+4\q^2+1  \\
          2\ri & 2\x \\
          -\ri \hspace*{-0.03in}-2\ri(\ri^2+\x^2)\hspace*{-0.03in}-\hspace*{-0.03in}2\p\vi & \hspace*{-0.03in}-\x\hspace*{-0.03in}-\hspace*{-0.03in}2\x(\ri^2+\x^2)\hspace*{-0.03in}-\hspace*{-0.03in}2\q\vi \\
          0 & 1
        \end{array}
        \right.\\
        \vspace*{-0.02in} \\
        \left.
        \hspace*{0.4in}
        \begin{array}{ccc}
          2\ri & -\ri-2\ri(\ri^2+\x^2)-2\p\vi & 0 \\
          2\x & -\x-2\x(\ri^2+\x^2)-2\q\vi & 1\\
          1 & -(\ri^2+\x^2) & 0 \\
          -(\ri^2+\x^2) &\ri^2+\x^2+(\ri^2+\x^2)^2+\vi^2& -x \\
          0 & -\x & 1 \\
        \end{array}
        \right].    
    \end{array}
    \]
  \end{small}
  \normalsize
  We simplify the expression in \cref{eq:HsnwM} to obtain the following expression for the Hessian, which we investigate in detail.
  \begin{align} 
    \nabla^2_{\bz\bz} L_c(\bz, \bphi, \bla, \bmu) = cM'\, \label{eq:Hsnzj}
  \end{align} 
  %\vspace*{-0.06in}
  \noindent where $M' = $
  %\footnotesize
  \begin{small}
  \[
    \begin{array}{l}
    \hspace*{-0.08in}
      \left[
        \hspace*{-0.05in}
        \begin{array}{cc}
          4\ri^2+4\p^2+1+(2d_{j}/c) & 4\ri\x+4\p\q \\
          4\ri\x+4\p\q & 4\x^2+4\q^2+1+(2d_{j}/c) \\
          2\ri & 2\x\\
          -\ri \hspace*{-0.03in}-\hspace*{-0.03in} 2\ri(\ri^2+\x^2) \hspace*{-0.03in}-\hspace*{-0.03in} 2\p\vi & \hspace*{-0.03in}-\hspace*{-0.03in}\x \hspace*{-0.03in}-\hspace*{-0.03in} 2\x(\ri^2+\x^2) \hspace*{-0.03in}-\hspace*{-0.03in} 2\q\vi \\
          0 & 1 \\
        \end{array}
        \right.\\
        \nonumber \\ 
        \left. 
        \hspace*{0.3in}
        \begin{array}{ccc}
          2\ri & -\ri-2\ri(\ri^2+\x^2)-2\p\vi & 0 \\
          2\x & -\x-2\x(\ri^2+\x^2)-2\q\vi & 1\\
          1 & -(\ri^2+\x^2) & 0 \\
          -(\ri^2+\x^2) & \ri^2+\x^2+(\ri^2+\x^2)^2+\vi^2& -x \\
          0 & -\x & 1 \\
        \end{array}
        \right].     \hspace*{0.1in}
    \end{array}
    \]
  \end{small}
  \normalsize

  Given that $c > 0$ and $\bla > \bzero$,
  we can show that the Hessian in \cref{eq:Hsnzj} is positive definite if $M'$ is so.
  And we can show that the $5 \times 5$ matrix $M'$ is positive definite by checking that all its five upper left subdeterminants are positive  \cite[Chapter~6.5]{St2016}.

  \begin{enumerate}
    \item $1 \times 1$ upper-left subdeterminant: 
      We get that
      \vspace*{-0.05in}
      \[ 4\ri^2+4\p^2+1+(2d_{j}/c) > 0 \]

      \vspace*{-0.05in}
      \noindent since $d_{j} = \lambda_4^* > 0$ and $c > 0$, and from the observation that $\ri^2, \p^2 \geq 0$.

      \smallskip
    \item $2 \times 2$ upper-left subdeterminant:

      \vspace*{-0.1in}
      \begin{small}
      \[
      \hspace*{-0.18in}
      \begin{array}{l}
        \left\vert
        \begin{matrix}
          4\ri^2+4\p^2+1+(2d_{j}/c) & 4\ri\x+4\p\q\\
          4\ri\x+4\p\q & 4\x^2+4\q^2+1+(2d_{j}/c)
        \end{matrix} \right\vert \\
        \vspace*{-0.05in} \\
        = 16(\ri^2\q^2+\p^2\x^2)+4(\ri^2+\p^2+\x^2+\q^2) + \\
         ~~\,(1/c)(8\x^2d_{j}+8\ri^2d_{j}+8\p^2d_{j}+8\q^2d_{j}+4d_{j}+(4d_{j}^2/c))\\
        ~~ -32\ri\q\p\x\\
        \vspace*{-0.1in}\\
         \geq 4(\ri^2+\p^2+\x^2+\q^2)+ \\
         ~\, (1/c)\Big(8\x^2d_{j}+8\ri^2d_{j}+8\p^2d_{j}+8\q^2d_{j}+4d_{j}+(4d_{j}^2/c)\Big)\\
        \vspace*{-0.15in}\\
        > 0 \\
        \vspace*{-0.15in}\\
      \end{array}
      \]
      \end{small}
      \hspace*{-0.1in} since $(4\ri\q-4\p\x)^2 \geq 0$, $d_{j} = \lambda_4^* > 0$, and $c > 0$.

      \medskip
    \item $3 \times 3$ upper-left subdeterminant:
      Expanding along the third row, for instance, gives
      \vspace*{-0.02in}
      \begin{small}
      \[
      \hspace*{-0.1in}
      \begin{array}{l}
        \left\vert
        \begin{matrix}
            4\ri^2+4\p^2+1+2d_{j}/c & 4\ri\x+4\p\q & 2\ri\\
            4\ri\x+4\p\q & 4\x^2+4\q^2+1+2d_{j}/c  & 2\x\\
            2\ri & 2\x & 1
        \end{matrix}
        \right\vert\\
        \vspace*{-0.1in}\\
        = \left[16(\q^2\ri^2+\p^2\q^2)+4(\ri^2+\p^2+\q^2) \, + \right. \\
        ~~~ \left. (1/c)\Big(8\ri^2d_{j}+8\p^2d_{j}+8\q^2d_{j}+4d_{j}+ (4d_{j}^2/c)\Big)+1\right]\\
        ~~~ -\left(16\p\q\ri\x+16\p^2\q^2\right)+ \\
        ~~~~ \Big(16\p\q\ri\x-16\q^2\ri^2-4\ri^2-(1/c)8\ri^2d_{j}\Big)\\
        \vspace*{-0.12in}\\
        = 4(\p^2+\q^2)+(1/c)\Big(8\p^2d_{j}+8\q^2d_{j}+4d_{j}+(4d_{j}^2/c)\Big
        ) \\
        ~~~ +1\\
        \vspace*{-0.12in}\\
        > 0
  %      \vspace*{-0.2in}
      \end{array}
      \]
      \end{small}
      \noindent following the same observations as before.

      %\smallskip
    \item $4 \times 4$ upper-left subdeterminant:
      Expanding along the fourth row gives

      %\vspace*{-0.1in}
      \begin{small}
        \[
        \begin{array}{l}
          \hspace*{-0.2in}
          \left\vert
            \begin{array}{c@{\hspace*{0.08in}}c}
              4\ri^2+4\p^2+1+(2d_{j}/c) & 4\ri\x+4\p\q \\
              4\ri\x+4\p\q & 4\x^2+4\q^2+1+(2d_{j}/c) \\
              2\ri & 2\x \\
              \hspace*{-0.07in}-\hspace*{-0.03in} \ri \hspace*{-0.03in}-\hspace*{-0.03in} 2\ri(\ri^2+\x^2) \hspace*{-0.03in}-\hspace*{-0.03in} 2\p\vi &
               -\x\hspace*{-0.02in}-\hspace*{-0.02in}2\x(\ri^2+\x^2)\hspace*{-0.02in}-\hspace*{-0.02in}2\q\vi
            \end{array}
          \right. \\
          \vspace*{-0.05in}\\
          \hspace*{0.38in}
          \left.
            \begin{array}{cc}
              2\ri & -\ri-2\ri(\ri^2+\x^2)-2\p\vi\\
              2\x & -\x-2\x(\ri^2+\x^2)-2\q\vi\\
              1 & -(\ri^2+\x^2)\\
              -(\ri^2+\x^2) & \ri^2+\x^2+(\ri^2+\x^2)^2+\vi^2 
            \end{array}
          \hspace*{-0.03in}\right\vert \\
          \vspace*{0.05in}\\
          \hspace*{-0.21in}
          = \left[\ri+2\ri(\ri^2+\x^2)+2\p\vi \right]\left[4\x\p\q-4\ri\q^2 \right. \\
            \hspace*{0.9in}  -2\p\vi-r-(1/c)(2d_{j}\ri+4d_{j}\p\vi) \Big]\\
            \hspace*{-0.2in}+\left[-\x-2\x(\ri^2+\x^2)-2\q\vi \right]\left[ 4\p^2\x-4\ri\p\q \right.\\
            \hspace*{0.83in}  +2\q\vi+x+(1/c)(2d_{j}\x+4d_{j}\q\vi) \Big]\\
            \hspace*{-0.2in}+\left[\ri^2+\x^2\right]\left[-16\p\q\ri\vi+4\ri^2\q^2+4\x^2\p^2-4\ri^2\p^2 \right. \\
           \hspace*{0.55in} -4\x^2\q^2+4\x\q\vi+4\p\vi\ri+\ri^2+\x^2 \Big]\\
           \hspace*{-0.2in} -(1/c)\left[ 8d_{j}\x^2\p^2+8d_{j}\ri^2\p^2+8d_{j}\x^2\q^2-8d_{j}\x\q\vi+ \right. \\
           \hspace*{0.4in}  8d_{j}\q^2r^2-8d_{j}\p\ri\vi+(1/c)(4d_{j}^2\x^2+4d_{j}^2\ri^2) \Big]\\
           \hspace*{-0.2in} +\left[ \ri^2+\x^2+(\ri^2+\x^2)^2+\vi^2 \right] \left[ 4\p^2+4\q^2+1+ \right. \\
             \hspace*{0.8in}  (1/c)\Big( 8d_{j}\q^2+8d_{j}\p^2+4d_{j}+(4d_{j}^2/c) \Big) \Big]\\
           \vspace*{-0.1in} \\
           \hspace*{-0.25in} = 8\x\p\q\vi+4\p^2\ri^2+4\q^2\x^2+ \\
           \hspace*{-0.25in} \vi\Big(\vi-4\p\ri-4\x\q\Big)
           +(1/c) \Big[8d_{j}\p^2\ri^2+8d_{j}\q^2\ri^2 + \\
           \hspace*{-0.25in} 8d_{j}\p^2\x^2+8d_{j}\q^2\x^2+2d_{j}v^2_i+2d_{j}v_i\Big(\vi \hspace*{-0.03in}-\hspace*{-0.03in} 4\p\ri \hspace*{-0.03in}-\hspace*{-0.03in} 4\x\q\Big) \\
           \hspace*{0.22in} +2d_{j}\ri^2+2d_{j}\x^2 +(1/c)\Big(4d_{j}^2\ri^2+4d_{j}^2\x^2+4d_{j}^2\vi^2\Big) \Big]\\
           \vspace*{-0.07in} \\
           \hspace*{-0.2in} > 0
        \end{array}
        \]
      \end{small}
      %\normalsize
      following the assumption of the theorem in \cref{eq:maincdn} which guarantees that $\vi-4\p\ri-4\x\q > 0$.

      %\pagebreak
      \bigskip
    \item $5 \times 5$ upper-left subdeterminant:
      Expanding along the fifth row gives

      \begin{small}
      \[
      \begin{array}{l}
        \hspace*{-0.2in}
        \left|
          \hspace*{-0.05in}
          \begin{array}{c@{\hspace*{0.12in}}c}
            4\ri^2+4\p^2+1+(2d_{j}/c) & 4\ri\x+4\p\q \\
            4\ri\x+4\p\q & 4\x^2+4\q^2+1+(2d_{j}/c) \\
            2\ri & 2\x\\
            \hspace*{-0.02in}-\hspace*{-0.02in}\ri \hspace*{-0.02in}-\hspace*{-0.02in}2\ri(\ri^2+\x^2) \hspace*{-0.02in}-\hspace*{-0.02in}2\p\vi & \hspace*{-0.05in} -\hspace*{-0.02in}\x \hspace*{-0.02in}-\hspace*{-0.02in}2\x(\ri^2+\x^2)\hspace*{-0.03in}-\hspace*{-0.02in}2\q\vi \\
            0 & 1 \\
          \end{array}
          \right.\\
          \nonumber \\ 
          \left. 
          \hspace*{0.1in}
          \begin{array}{ccc}
            2\ri & -\ri-2\ri(\ri^2+\x^2)-2\p\vi & 0 \\
            2\x & -\x-2\x(\ri^2+\x^2)-2\q\vi & 1\\
            1 & -(\ri^2+\x^2) & 0 \\
            -(\ri^2+\x^2) & \ri^2+\x^2+(\ri^2+\x^2)^2+\vi^2& -x \\
            0 & -\x & 1 \\
          \end{array}
          \hspace*{-0.05in} \right|      \\
          \vspace*{-0.04in}\\
          \hspace*{-0.17in} = -4\ri\x\p\q-4\ri^2\p^2+4\ri\p\vi+2\q\x\vi-\vi^2+ \\
          \hspace*{-0in} (1/c)(-2d_{j}\ri^2-2d_{j}\vi^2+4d_{j}\q\x\vi) -4\ri\x\p\q \\
          \hspace*{0in} -4\q^2\x^2 + 2\q\vi\x + (1/c)\left[-8d_{j}\p^2\x^2-8d_{j}\q^2\x^2  \right.\\
            \left. ~+4d_{j}\q\x\vi -2d_{j}\x^2-(1/c)4d_{j}^2\x^2 \right]+8\x\ri\p\q \\
          \hspace*{0in}\left. +4\p^2\ri^2 + 4\q^2\x^2 - 4\p\ri\vi-4\q\x\vi+\vi^2 \right. \\
            \hspace*{0in} +(1/c)\left[8d_{j}\p^2\ri^2 + 8d_{j}\q^2\ri^2 + 8d_{j}\p^2\x^2+8d_{j}\q^2\x^2\right. \\
              \hspace*{0.42in} \left. -8d_{j}\p\ri\vi - 8d_{j}\q\x\vi + 4d_{j}\vi^2 + 2d_{j}\ri^2 \right.\\
              \hspace*{0.42in} \left. + 2d_{j}\x^2 + (1/c)(4d_{j}^2\ri^2+4d_{j}^2\x^2+4d_{j}^2\vi^2)\right] \\
          \vspace*{-0.04in}\\
          \hspace*{-0.2in} = (1/c)\left[ 8d_{j}\p^2\ri^2+8d_{j}\q^2\ri^2+ 2d_{j}\vi\Big(\vi-4\p\ri\Big)+ \right. \\
          \hspace*{0.3in} \left. (1/c)(4d_{j}^2\ri^2+4d_{j}^2\vi^2) \right] \\
          \hspace*{-0.2in} > 0
      \end{array}
      \]
      \end{small}
      \noindent as the assumption in \cref{eq:maincdn} gives that $\vi-4\p\ri > 0$.
  \end{enumerate}

  Hence we get that $\bz^*$ satisfying the setting of Proposition \ref{prop:sufcdnopt} is a strict local minimum of $f$.
  Thus we get the convergence of ENDiCo-OPF algorithm for a single iteration step when $f$ is convex. 
\end{proof}
  
 \vspace*{-0.2in}
\subsection{Modification of Algorithm: \texorpdfstring{$\D$}{}-ENDiCo-OPF} \label{ssec:subroutine}

Developing similar convergence guarantees for local systems over time and further extending the same to global networks present non-trivial challenges.
Motivated by the convergence behavior in practice of the original ENDiCO algorithm \cite{sadnan2020real}, we present a modification of the algorithm where a convergence parameter $\Dt \geq 1$ is adaptively chosen in each iteration step $t$.
We then derive conditions generalizing ones in Theorem \ref{thm:subsyscvgce} that guarantee convergence of a local system over time when $\Dt=1$.

Motivated by the definition of bi-Lipschitz functions in real analysis \cite[Def 28.7]{Ye2000}, we define the following condition for the convergence of voltage $v_j$ over time as captured by the parameter $\Dt \geq 1$:
\begin{equation} \label{eq:Deltacdn}
\small
  \frac{1}{\Dt} v_j^{(t-1)} ~ \leq~ v_j^{(t)} ~\leq~ \Dt v_j^{(t-1)}.
\end{equation}
Note that $\Dt = 1$ in the above condition implies $v_j^{(t)} = v_j^{(t-1)}$, which certifies convergence of $v_j$ over time.

The only modification we make to the ENDiCo-OPF algorithm is the use of the scaling parameter $\Dt$ that is suitably initialized by the user ($\D^{(0)} > 1$) and the addition of Step 8 presented in Algorithm \ref{alg:DENDiCo}.
Subsequently, the final step advancing the algorithm to the next iteration step is now numbered as Step 9.
\begin{algorithm}
\small
  %\TitleOfAlgo{$\D$-ENDiCo} \label{alg:DENDiCo}
  \SetKwInOut{Input}{input}
  \Input{Set $\D^{(0)} > 1$}
  \vspace*{-0.1in}
  \hrulefill \\
  \setcounter{AlgoLine}{7}
  \ShowLn
  \uIf{$\vt{j}=\vm{j}$}
      {Stop}
  \uElseIf{Eqn.~(\ref{eq:Deltacdn}) holds}
      { $\D^{(t+1)} = 1+\frac{\Dt-1}{2}$}
  \uElse{$\D^{(t+1)} = 2\Dt-1$}
  \setcounter{AlgoLine}{8}
  \ShowLn
  Move forward to the next iteration step $(t+1)$    
  \caption{$\D$-ENDiCo-OPF: Changes to Algo. \ref{alg:ENDiCo-OPF}} \label{alg:DENDiCo}
\end{algorithm}

\vspace*{-0.2in}
\subsection{Local Convergence over Time} \label{ssec:lclcvgnctime}

We now consider the convergence under $\D$-ENDiCo-OPF of a local subsystem with a single child node, i.e., one consisting of nodes $\{i,j,k\}$ as shown in \cref{fig:3ndW1child}.
The system of equations (\ref{eq:bdcase-3}) corresponds to System \ref{eq:P1} but has subsystems for iteration steps $t$ and $(t+1)$ (sub-equations (\ref{eq:bdcase-3-t}) and (\ref{eq:bdcase-3-tp1})) as well as a subsystem for determining $\pt{jk}$ and $\qt{jk}$ (\ref{eq:bdcase-3-PQ}) as well as the convergence condition (\ref{eq:Deltacdn}) in sub-equation (\ref{eq:bdcase-3-Dcdn}) apart from bounds (\ref{eq:bdcase-3-bds}).
%\clearpage

\vspace*{-0.1in}
\begin{subequations}
  \label{eq:bdcase-3}
  %\vspace*{-0.2in}
\small
  \begin{align}
    \min~ &\rt{ij}\lt{ij}+\rt{ij}\la{ij}+\rt{jk}\lt{jk}\\
    \text{s.t.}~~\,& \nonumber \\ 
    %------------------------- time step t -------------
    &\text{Constraints for iteration step $t$ :} \label{eq:bdcase-3-t}\\
    & \hspace*{-0.2in}  \nonumber 
    \begin{array}{l}
      \,-\rt{ij}\lt{ij} = P_{jk}^{(t-1)} - \pt{ij} + p_{Lj}^{(t)} - p_{Dj}^{(t)}\\
      \vspace*{-0.1in} \\
      -\xt{ij}\lt{ij} = Q_{jk}^{(t-1)} - \qt{ij} + q_{Lj}^{(t)} - q_{Dj}^{(t)}\\
      \vspace*{-0.1in} \\
      \hspace*{0.27in} \vt{j} = \vm{i} - 2(\rt{ij}\pt{ij}+\xt{ij}\qt{ij}) +  (\rt{ij}^2+\xt{ij}^2)\lt{ij}\\
      \vspace*{-0.1in} \\
      \hspace*{0.27in}\lt{ij} = \left( (\pt{ij})^2+(\qt{ij})^2 \right) / \vm{i}
    \end{array}  \\
    \vspace*{-0.2in} \nonumber \\
    %--------------------------cijld system-------------
    &\text{Solve for $\pt{jk},\qt{jk}$ :} \label{eq:bdcase-3-PQ} \\
    & \hspace*{-0.2in} \nonumber
    \begin{array}{l}
      -\rt{jk}\lt{jk} = - \pt{jk} + p_{Lk}^{(t)} - p_{Dk}^{(t)}\\
      \vspace*{-0.1in} \\
      -\xt{jk}\lt{jk} = - \qt{jk} + q_{Lk}^{(t)} - q_{Dk}^{(t)}\\
      \vspace*{-0.1in} \\
      \hspace*{0.27in} \vt{k}= \vm{j} - 2(\rt{jk}\pt{jk}+\xt{jk}\qt{jk}) +  (\rt{jk}^2+\xt{jk}^2)\lt{jk}\\
      \vspace*{-0.1in} \\
      \hspace*{0.27in} \lt{jk} = (\pt{jk})^2+(\qt{jk})^2/\vm{j}
    \end{array}  \\
    \vspace*{-0.2in} \nonumber \\
    %------------------------- time step t+1 -------------
    &\text{Constraints for iteration step $t+1$ :} \label{eq:bdcase-3-tp1}     \\
    & \hspace*{-0.3in}  \nonumber 
    \begin{array}{l}
    -\rt{ij}\la{ij} = P_{jk}^{(t)} - \pt{ij} + p_{Lj}^{(t+1)} - p_{Dj}^{(t)}\\
    \vspace*{-0.1in} \\
    -\xt{ij}\la{ij} = Q_{jk}^{(t)} - \qt{ij} + q_{Lj}^{(t+1)} - q_{Dj}^{(t+1)}\\
    \vspace*{-0.1in} \\
    \hspace*{0.15in} \va{j}= \vt{i} - 2(\rt{ij}\pa{ij}+\xt{ij}\qa{ij}) + (\rt{ij}^2+\xt{ij}^2)\la{ij}\\
    \vspace*{-0.1in} \\
    \hspace*{0.27in} \la{ij} = \left((\pa{ij})^2+(\qa{ij})^2\right)/ \vt{i}
    \end{array}  \end{align}
\end{subequations}

\vspace*{-0.1in}
\addtocounter{equation}{-1}
\begin{subequations}
\small
  \label{eq:bdcase-3_2}
  \addtocounter{equation}{4}
  \begin{align}
    %------------------------- force vj(t+1)=vj(t) -------------
    &\text{Bounding $\va{j}$ in terms of $\vt{j}$ :}  \label{eq:bdcase-3-Dcdn}\\
    & \left(1/\D^{(t+1)}\right) \vt{j} ~\leq~  \va{j} ~\leq~ \D^{(t+1)} \, \vt{j} \nonumber \\
    \vspace{-0.1in} \nonumber \\
    &\text{Bounds on variables :} \label{eq:bdcase-3-bds} \\
    &~~~~~~~\underline{V}^2\leq \vt{i},\vt{j}, \vt{k}, \va{j}\leq \Bar{V}^2 \nonumber\\
    &-\sqrt{S_{DR_j}^2-p_{Dj}^2}\leq q_{Dj}^{(t)}\leq \sqrt{S_{DR_j}^2-p_{Dj}^2} \nonumber \\
    &-\sqrt{S_{DR_k}^2-p_{Dk}^2}\leq q_{Dk}^{(t)}\leq \sqrt{S_{DR_k}^2-p_{Dk}^2} \nonumber \\
    &-\sqrt{S_{DR_j}^2-p_{Dj}^2}\leq q_{Dj}^{(t+1)}\leq \sqrt{S_{DR_j}^2-p_{Dj}^2} \nonumber \\
    & \lt{ij}\leq \left(I_{ij}^{\text{rated}}\right)^2, \hspace*{0.1in} \lt{jk}\leq \left(I_{jk}^{\text{rated}}\right)^2, \hspace*{0.1in} \la{ij}\leq \left(I_{ij}^{\text{rated}}\right)^2 \nonumber
  \end{align}
\end{subequations}

\begin{thm}
  \label{thm:single-bd}
  Assume that the objective function and all constraints in System (\ref{eq:bdcase-3}) are twice differentiable.
  Then the local subsystem converges at iteration step $t$ if the $\D$-convergence condition (\ref{eq:Deltacdn}) holds with $\Dt=1$ as well as the following conditions hold.

  \vspace*{-0.1in}
  \begin{equation}  
  \small
  \label{eq:cdnsingle-bd}
    \begin{aligned}
      &\xt{ij} - \rt{ij}\geq 0  \\
      &\vm{i}\vm{j}-4\pa{ij}\rt{ij}\vm{j}-4\pt{jk}\rt{jk}\vm{i}\geq 0  \\
      &\vm{i} \vm{j} - 4\pt{ij}\rt{ij}\vm{j}-4\pt{jk}\rt{jk}\vm{i}\geq 0  \\
      &\vm{i} - 4\pa{ij}\rt{ij} - 4\pt{ij}\rt{ij}\geq 0  \\
      &\vm{i}-4\pa{ij}\rt{ij}-2\pt{ij}\rt{ij}-2\pt{ij}\xt{ij}\geq 0  \\
      &\vm{i}-4\pt{ij}\rt{ij}-4\pt{ij}\xt{ij}\geq 0  \\
      &\vm{i}-\pa{ij}\rt{ij}-\pt{jk}\rt{jk}\geq 0  \\
      &\vm{i}\vm{j} - 4\pt{jk}\rt{jk}\vm{i}   \\
      &\hspace*{0.7in} -2\pt{ij}\rt{ij}\vm{j}-2\pt{ij}\xt{ij}\vm{j}\geq 0  \\
      &({\vm{j}})^3 - 4\pa{ij}\rt{ij}\vm{i}\vm{j}  \\
      &\hspace*{0.52in} -2\pt{jk}\rt{jk}\vm{i}-4\pa{ij}\rt{ij}\geq 0   \\
      & \vm{i}-8\pt{ij}\rt{ij} \geq 0  \\
      & \vm{i}-8\qt{ij}\xt{ij} \geq 0  \\
      & \vm{i}-8\pt{jk}\rt{jk} \geq 0
    \end{aligned}
  \end{equation}
\end{thm}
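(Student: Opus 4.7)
The plan is to mirror the proof strategy used for Theorem \ref{thm:subsyscvgce}, extending it to the larger coupled system (\ref{eq:bdcase-3}) that now spans two consecutive iteration steps $t$ and $t+1$. I would first rewrite the system in the standard form of (\ref{eq:P2}) by stacking the local variables of step $t$, the auxiliary variables $(P_{jk}^{(t)},Q_{jk}^{(t)},v_k^{(t)},\ell_{jk}^{(t)},q_{Dk}^{(t)})$ needed to provide children data, and the local variables of step $t+1$ into a single vector $\bz$, then convert the inequality bounds in (\ref{eq:bdcase-3-bds}) to equalities via squared slack variables $\varphi_r$ exactly as done in System (\ref{eq:P2}). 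The voltage-convergence condition (\ref{eq:bdcase-3-Dcdn}) with $\Delta^{(t)}=1$ collapses to the equality $v_j^{(t+1)}=v_j^{(t)}$, which I would treat as an additional equality constraint coupling the two sub-blocks.

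Next I would invoke Lemma \ref{lem:lgrngexist}, suitably re-stated for the enlarged system, to guarantee the existence of Lagrange multipliers $\bla^*,\bmu^*$ that verify the first-order part of Proposition \ref{prop:sufcdnopt}. The argument carries over verbatim once linear independence of the constraint gradients at the candidate optimum is checked; the block-triangular structure of the constraints (step-$t$ block, children block, step-$(t+1)$ block, with the voltage-equality coupling) keeps the gradients independent, so no new technical work is required here.

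The core of the proof is the second-order part: showing that $\nabla^2_{\bz\bz}L_c$ is positive definite. Following the template of Theorem \ref{thm:subsyscvgce}, I would write the augmented-Lagrangian Hessian as $cM'$ (plus a positive diagonal contribution from the $\ell$-definition multipliers $d_j = \lambda_4^*$) and exploit the approximate block-triangular structure induced by the coupling: the step-$t$ block produces the $5\times 5$ matrix already analyzed in Theorem \ref{thm:subsyscvgce}, the children block produces an analogous $5\times 5$ matrix with $\{jk\}$ quantities replacing $\{ij\}$ quantities, and the step-$(t+1)$ block contributes another analogous $5\times 5$ piece. The $v_j^{(t)}=v_j^{(t+1)}$ coupling constraint contributes off-diagonal cross-terms that I must track carefully. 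I would then compute upper-left subdeterminants of the full $\approx 15\times 15$ matrix $M'$ incrementally: the first five recover exactly the calculations from Theorem \ref{thm:subsyscvgce} and yield positivity from $\vm{i}-4\pt{ij}\rt{ij}-4\qt{ij}\xt{ij}\geq 0$; the next five introduce the children's contribution and require bounds such as $\vm{j}-8\pt{jk}\rt{jk}\geq 0$ and the mixed product bounds $\vm{i}\vm{j}-4\pt{jk}\rt{jk}\vm{i}-4\pt{ij}\rt{ij}\vm{j}\geq 0$; the last five bring in step-$(t+1)$ quantities and give rise to the remaining conditions, including $\vm{i}-4\pa{ij}\rt{ij}-4\pt{ij}\rt{ij}\geq 0$, the cubic expression in $\vm{j}$, and the simple margin $\xt{ij}-\rt{ij}\geq 0$ that arises when cross-terms between $P$ and $Q$ variables across steps need to be controlled.

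The main obstacle I expect is purely computational: expanding a $15\times 15$ determinant (or a sequence of increasing subdeterminants) symbolically and matching each positivity requirement to one of the thirteen inequalities in (\ref{eq:cdnsingle-bd}). I would organize this by expanding along the rows corresponding to the coupling constraint and the $v_j$ equations, since those carry the structural link between time steps, and by grouping terms so that each subdeterminant can be written as a sum of squares plus a term that is nonnegative under one of the listed inequalities, exactly matching the pattern used in cases (1)--(5) of Theorem~\ref{thm:subsyscvgce}. Once every upper-left subdeterminant is shown positive, positive definiteness of $\nabla^2_{\bz\bz}L_c$ follows from the standard criterion \cite[Chapter~6.5]{St2016}, Proposition \ref{prop:sufcdnopt} applies, and convergence of the local subsystem at iteration step $t$ under $\Delta$-ENDiCo-OPF is established.
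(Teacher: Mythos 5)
Your proposal matches the paper's proof in essentially every respect: the authors likewise stack the fifteen variables of the step-$t$, children, and step-$(t+1)$ blocks into a single vector, form the augmented Lagrangian, and establish convergence by checking that all upper-left subdeterminants of the Hessian are positive, using symbolic computation to expand the higher-order minors and grouping terms so that each positivity requirement reduces to one of the thirteen conditions in (\ref{eq:cdnsingle-bd}). The only cosmetic difference is that you collapse the $\D$-condition to the equality $v_j^{(t+1)}=v_j^{(t)}$ while the paper retains it as the two-sided inequality constraint (whose multiplier entries appear in the gradient but drop out of the Hessian); this does not change the argument.
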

We note that most conditions in \cref{eq:cdnsingle-bd} hold for typical values of the variables except the first one: $\xt{ij} \geq \rt{ij}$ may not always hold.
Similar to the condition specified in Theorem \ref{thm:subsyscvgce} for local convergence, the above system also gives a sufficient condition.
In practice, we observe convergence even when $\xt{ij} \geq \rt{ij}$ does not hold.

\begin{proof}
  Analogous to the proof of Theorem \ref{thm:subsyscvgce}, we construct the augment Lagrangian function of System \eqref{eq:bdcase-3} w.r.t.~to the following variable vector (in place of the one in \cref{eq:zj}):
  \begin{equation} \label{eq:zj3t}
  %\small
    \hspace*{-0.1in}
    \begin{aligned}
      \bz =  \left[ \begin{matrix}
          \pt{ij} & \qt{ij} & \vt{j} & \lt{ij} & q_{Dj}^{(t)} & \ldots \hspace*{0.65in}
        \end{matrix}\right.\\
        \left.\begin{matrix}
          \pt{jk} & \qt{jk} & \vt{k} & \lt{jk} & q_{Dk}^{(t)} & \ldots \hspace*{0.65in}
        \end{matrix}\right.\\
        \left.\begin{matrix}
           \pa{ij} & \qa{ij} & \va{j} & \la{ij} & q_{Dj}^{(t+1)} & 
        \end{matrix} \right]^T
      \end{aligned}
  \end{equation}
  
  which gives the following first-order gradient:
  \[
  \begin{aligned}
    \hspace*{-0.04in} \nabla_{\bz} L \hspace*{-0.02in} = \hspace*{-0.05in}
    \left[
      \begin{matrix}
        -1 &  0 & -2\rt{ij} & 2\pt{ij} & 1 & 1 & 0 \\
        0 & -1 & -2\xt{ij} & 2\qt{ij} & 0 & 0 & 0 \\
        0 &  0 &    -1     &     0    & 0 & 0 & 0 \\
        \rt{ij} & \xt{ij} & \rt{ij}^2+\xt{ij}^2 & -\vt{j} & 0 & 0 & 0  \\
        0  & -1 & 0 & 0 & 0 & 0 & 0  \\
        %------------------------------------------------------------
        0  & 0 & 0 & 0 & -1 & 0 & -2\rt{jk} \\
        0  & 0 & 0 & 0 & 0 & -1 & -2\xt{jk} \\
        0  & 0 & 0 & 0 & 0 & 0 & -1 \\
        0  & 0 & 0 & 0 & \rt{jk} & \xt{jk} & \rt{jk}^2+\xt{jk}^2 \\
        0  & 0 & 0 & 0 & 0 & -1 & 0  \\
        %------------------------------------------------------------
        0  & 0 & 0 & 0 & 0 & 0 & 0 \\
        0  & 0 & 0 & 0 & 0 & 0 & 0 \\
        0  & 0 & 0 & 0 & 0 & 0 & 0 \\
        0  & 0 & 0 & 0 & 0 & 0 & 0 \\
        0  & 0 & 0 & 0 & 0 & 0 & 0 
      \end{matrix}        
      \right.
  \end{aligned}
  \]
  \[
  \begin{aligned}
      \hspace*{-0.05in}
      \left.
      \begin{matrix}
          0 & 0 & 0 & 0 & 0 & 0 & 0   \\
          0 & 0 & 0 & 0 & 0 & 0 & 0   \\
          0 & 0 & 0 & 0 & 0 & 0 & 0  \\
          0 & 0 & 0 & 0 & 0 & 0 & 0  \\
          0 & 0 & 0 & 0 & 0 & 0 & 0  \\
          %------------------------------------------------------------
          2\pt{jk} & 0 & 0 & 0 & 0 & 0 & 0  \\
          2\qt{jk} & 0 & 0 & 0 & 0 & 0 & 0  \\
          0 & 0 & 0 & 0 & 0 & 1 & 1\\
          -\vt{k} & 0 & 0 & 0 & 0 & 0 & 0  \\
          0 & 0 & 0 & 0 & 0 & 0 & 0  \\
          %------------------------------------------------------------
          0 &   -1    &    0    &    2\rt{ij}         & 2\pa{ij} &   0   &   0  \\
          0 &    0    &   -1    &   -2\xt{ij}         & 2\qa{ij} &   0   &   0  \\
          0 &    0    &    0    &        -1           &     0    &   0   &   0  \\
          0 & \rt{ij} & \xt{ij} & \rt{ij}^2+\xt{ij}^2 & -\va{j}  & -1/\Dt & -\Dt  \\
          0 &    0    &   -1    &         0           &     0    &   0   &   0  \\
        \end{matrix}        
      \,\right]
  \end{aligned}
  \]
  We then considered the positive definiteness of the Hessian $\nabla_{\bz\bz}L$ to derive conditions under which all its upper left sub-determinants are positive.
  We used symbolic computation in Maple \cite{maple} to simplify expressions for the higher order sub-determinants.
 These expressions turn out to be too tedious to present here in full, so we present them online \cite{Determinants}.
  Examining expressions of the sub-determinants led to the conditions in \cref{eq:cdnsingle-bd}, which when satisfied guarantee positive definiteness of the Hessian.
  Our approach was similar to the one we used in the proof of Theorem \ref{thm:subsyscvgce} where we paired or grouped positive terms in the determinant expressions with negative ones so that their sum is guaranteed to be positive (when the condition in \cref{eq:maincdn} is satisfied).

  While $\Dt$ appears in the last two columns of the gradient $\nabla_{\bz}L$, it does not appear in the Hessian $\nabla_{\bz\bz}L$ and hence does not appear in \cref{eq:cdnsingle-bd}.
  But these conditions holding along with the $\D$-convergence condition (\ref{eq:Deltacdn}) with $\Dt=1$ guarantee the convergence over time of the local subsystem.
\end{proof}

%\vspace*{-0.2in}
\subsection{Global System Convergence for Line Systems} \label{ssec:globalcvgnc}

As the next generalization, we consider a \emph{line} network with $n \geq 4$ nodes.
To keep notation simple, we label the nodes $\{1, \dots, n\}$ where $1$ is the source node and $n$ is the final leaf node
(note that $n$ is also the number of nodes in the line).
Not surprisingly, this system presents even more challenges to derive conditions that guarantee global convergence over time.
Guided by the convergence behavior of the original ENDiCo algorithm in practice, we derive conditions under a suitable assumption that when satisfied in a time sequential manner guarantee global convergence of the line system in a sequential manner, i.e., starting with the first local subsystem $\{1, 2, 3\}$, moving to the next local subsystem $\{2, 3, 4\}$, and so on until the last local subsystem $\{n-2, n-1, n\}$.

\begin{asmn} \label{asm:cvgcpres}
  Given two adjacent and overlapping local subsystems $\{i, i+1, i+2\}$ and $\{i+1, i+2, i+3\}$ from the line network $\{1, \dots, n\}$ where the first subsystem is convergent at iteration step $t_i$, the convergence of the controllable node $i+1$ is preserved in the next iteration step $t_{i+1} \geq t_i+1$ for the second subsystem.
\end{asmn}

We get the guarantee of global convergence of a line system under Assumption \ref{asm:cvgcpres} by repeatedly applying Theorem \ref{thm:single-bd} in sequence going from the source node to the final leaf node.

\begin{thm} \label{thm:globalcvgnc}
  The global convergence of a line system $\{1, \dots, n\}$ is guaranteed under Assumption \ref{asm:cvgcpres} if the following conditions hold at each local subsystem $\{i, i+1, i+2\}$ in a sequential manner with iteration steps $t_{i+1} \geq t_i + 1$ for $i=1,\dots,n-2$.
  \begin{equation} \label{eq:LnDeltacdn}
    \begin{aligned}
      \left( 1/\D^{t_i} \right) v_{i+1}^{(t_i-1)} ~ \leq~ v_{i+1}^{(t_i)} ~\leq~ \D^{t_i} v_{i+1}^{(t_i-1)} \\
      %\vspace*{-0.12in}\\
      \text{ holds for } \D^{t_i} = 1 \text{ for } i=1, \dots, n-2.
    \end{aligned}
  \end{equation}

 % \vspace*{-0.1in}
  \begin{equation}  \label{eq:LnCdns} 
    \begin{aligned}
      &\xti{i,i+1} - \rti{i,i+1}\geq 0  \\
      &\vmi{i}\vmi{i+1}-4\pai{i,i+1}\rti{i,i+1}\vmi{i+1}- \\
      &\hspace*{0.87in} 4\pti{i+1,i+2}\rti{i+1,i+2}\vmi{i}\geq 0  \\
      &\vmi{i} \vmi{i+1} - 4\pti{i,i+1}\rti{i,i+1}\vmi{i+1}-\\
      &\hspace*{0.87in} 4\pti{i+1,i+2}\rti{i+1,i+2}\vmi{i}\geq 0  \\
      &\vmi{i} - 4\pai{i,i+1}\rti{i,i+1} - 4\pti{i,i+1}\rti{i,i+1}\geq 0  \\
      &\vmi{i}-4\pai{i,i+1}\rti{i,i+1}-2\pti{i,i+1}\rti{i,i+1}- \\
      &\hspace*{1.55in} 2\pti{i,i+1}\xti{i,i+1}\geq 0  \\
      &\vmi{i}-4\pti{i,i+1}\rti{i,i+1}-4\pti{i,i+1}\xti{i,i+1}\geq 0  \\
      &\vmi{i}-\pai{i,i+1}\rti{i,i+1}-\pti{i+1,i+2}\rti{i+1,i+2}\geq 0 \\
      &\vmi{i}\vmi{i+1} - 4\pti{i+1,i+2}\rti{i+1,i+2}\vmi{i} \\
      &~-2\pti{i,i+1}\rti{i,i+1}\vmi{i+1}- 2\pti{i,i+1}\xti{i,i+1}\vmi{i+1}\geq 0  \\
      &({\vmi{i+1}})^3 - 4\pai{i,i+1}\rti{i,i+1}\vmi{i}\vmi{i+1}  \\
      &~-2\pti{i+1,i+2}\rti{i+1,i+2}\vmi{i}-4\pai{i,i+1}\rti{i,i+1}\geq 0   \\
      &\vmi{i}-8\pti{i,i+1}\rti{i,i+1} \geq 0  \\
      &\vmi{i}-8\qti{i,i+1}\xti{i,i+1} \geq 0  \\
      &\vmi{i}-8\pti{i+1,i+2}\rti{i+1,i+2} \geq 0 
    \end{aligned}
  \end{equation}
\end{thm}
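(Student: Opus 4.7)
The plan is to prove Theorem~\ref{thm:globalcvgnc} by a sequential induction on $i$ that marches along the line network from the source node toward the leaf, invoking Theorem~\ref{thm:single-bd} at each local subsystem and using Assumption~\ref{asm:cvgcpres} to transmit the already-established convergence across overlapping subsystems. The key observation is that the hypotheses of Theorem~\ref{thm:globalcvgnc}, when specialized to a fixed index $i$, exactly match the hypotheses of Theorem~\ref{thm:single-bd} applied to the three-node subsystem $\{i, i+1, i+2\}$ with node $i$ playing the role of the parent, $i+1$ the middle controllable node, and $i+2$ the child.

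For the base case, I would consider $i=1$ and note that the conditions in \cref{eq:LnCdns} specialized to $i=1$ coincide with the set \cref{eq:cdnsingle-bd} after the identification $(i,j,k) \leftrightarrow (1,2,3)$, and that \cref{eq:LnDeltacdn} with $\D^{t_1}=1$ reproduces the $\D$-convergence condition (\ref{eq:Deltacdn}) required by Theorem~\ref{thm:single-bd}. Since the objective function and constraints are twice differentiable, Theorem~\ref{thm:single-bd} applies directly and yields convergence of the subsystem $\{1,2,3\}$ at iteration step $t_1$.

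For the inductive step, I would assume that the subsystem $\{i, i+1, i+2\}$ is convergent at iteration step $t_i$, and look to establish convergence of the next overlapping subsystem $\{i+1, i+2, i+3\}$. Assumption~\ref{asm:cvgcpres} guarantees that the convergence of the middle controllable node $i+1$ is preserved at the subsequent iteration step $t_{i+1} \geq t_i + 1$. Consequently, from the viewpoint of the subsystem $\{i+1, i+2, i+3\}$, node $i+1$ now supplies a stable parent voltage $v_{i+1}^{(t_{i+1}-1)}$, placing this subsystem into the exact structural setting required by Theorem~\ref{thm:single-bd}. The remaining conditions of \cref{eq:LnCdns} at index $i+1$ and the condition \cref{eq:LnDeltacdn} with $\D^{t_{i+1}}=1$ are then precisely the hypotheses of Theorem~\ref{thm:single-bd} for this shifted subsystem, and we obtain its convergence at iteration step $t_{i+1}$. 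Iterating this argument through $i = n-2$ produces convergence at every local subsystem along the line, which is exactly global convergence of the line network.

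The principal obstacle I foresee is the subtle interaction at the overlap node $i+1$: a priori, the local OPF solved on $\{i+1, i+2, i+3\}$ could perturb the already-converged voltage and power-flow values at node $i+1$ inherited from the previous subsystem, which would break the induction. Assumption~\ref{asm:cvgcpres} is the hypothesis designed precisely to preclude this behavior, so the technical care in the proof lies in invoking it with the correct indexing and verifying that the iteration schedule $t_{i+1} \geq t_i + 1$ genuinely supplies Theorem~\ref{thm:single-bd} with a stable \emph{parent} quantity at the shifted subsystem. Once this bookkeeping is handled cleanly, the rest of the argument is a straightforward concatenation of local applications of Theorem~\ref{thm:single-bd}, with no further analytic work required beyond what is already established.
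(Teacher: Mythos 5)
Your proposal is correct and matches the paper's own proof essentially step for step: both arguments proceed by sequentially applying Theorem~\ref{thm:single-bd} to each overlapping three-node subsystem, starting from $\{1,2,3\}$, and both invoke Assumption~\ref{asm:cvgcpres} to treat the converged middle node of one subsystem as the fixed source node of the next. Your added remark about the overlap node being the potential failure point is a fair elaboration of why the assumption is needed, but it introduces no new machinery beyond what the paper uses.
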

\vspace*{0.2in}

We note that most conditions in \cref{eq:LnCdns} hold for typical values of the variables except the first one.
But we observe global convergence in practice in the sequential manner even when $\xti{i,i+1} \geq \rti{i,i+1}$ may not hold.

\begin{proof}
  Note that conditions in \cref{eq:LnCdns} are the same as those in \cref{eq:cdnsingle-bd} from Theorem \ref{thm:single-bd} applied for the local subsystem $\{i, i+1, i+2\}$ in place of $\{i,j,k\}$.
  Since conditions in \cref{eq:LnDeltacdn,eq:LnCdns} hold for $i=1$, the first local subsystem $\{1,2,3\}$ is convergent at iteration step $t_1$ following Theorem \ref{thm:single-bd}.
  By Assumption \ref{asm:cvgcpres}, the convergence of node $2$ is preserved in the next iteration step, and hence it can be treated as the fixed source node for the next local subsystem $\{2,3,4\}$.
  The convergence of this subsystem is then guaranteed at time $t_2 \geq t_1+1$ by conditions in \cref{eq:LnDeltacdn,eq:LnCdns} holding for $i=2$.
  The overall result follows by the sequential application of Theorem \ref{thm:single-bd}.
\end{proof}

\section{Numerical Study} \label{sec:nmrclstud}
In this section, we demonstrate the convergence properties of the ENDiCo-OPF algorithm with the help of numerical simulations.
We also validate the optimality of our algorithm by comparing its results with those of a centralized solution.
These simulations not only justify the convergence analysis of the method but also showcase the efficacy of the proposed real-time distributed controller to attain optimal power flow solutions.
After attaining the optimal dispatch, the controller shares the computed boundary variables with it its neighbor instead of implementing and measuring the variables.

\begin{figure}[ht!]
  \centering
    \includegraphics[width=0.9\columnwidth]{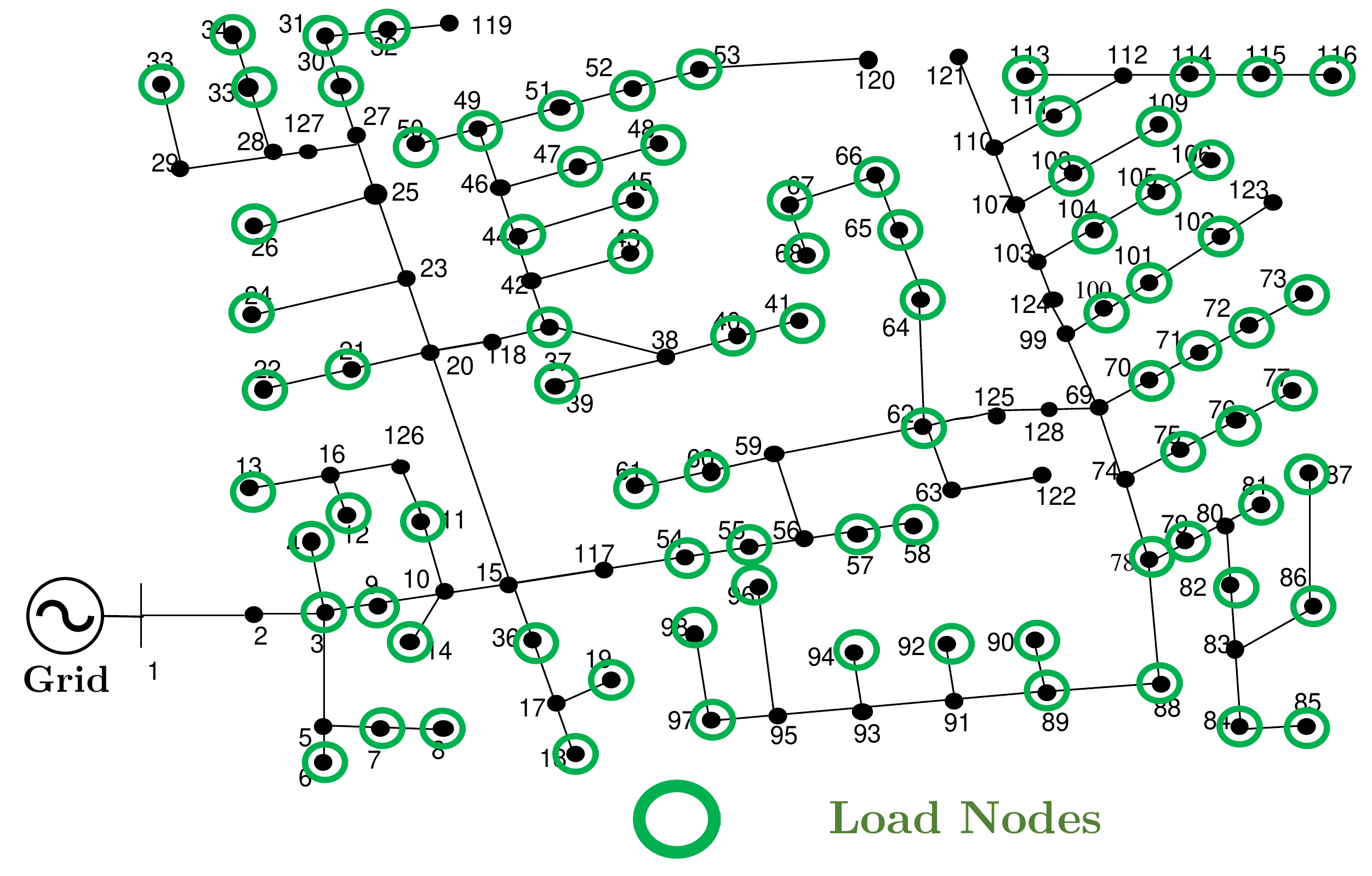}
  \caption{IEEE-123 Bus Test System
  }
  \vspace*{-0.5cm}
  \label{Test_sys}
\end{figure}

\vspace*{-0.1in}
\subsection{Simulated System and Results} 
As a test system, we simulated a balanced IEEE-123 bus system with a maximum of 85 DERs (PVs) connected to the network (Fig. \ref{Test_sys}), where the DER/PV penetration can vary from 10\% to 100\%.
As a cost function, we have simulated both (i) active power loss minimization ($f = r_{ij}l_{ij}$), and (ii) voltage deviation ($\Delta V$) minimization ($f = {(v_j-v_{\mbox{ref}})^2}$ ) optimization problems.
\subsubsection{Residual and Objective Value Convergence}
We have simulated the test system with 10\%, 50\%, and 100\% DER/PV penetration cases for both active power loss minimization and $\Delta V$ minimization.
The system converged after 42 iterations for all six cases (Fig.~\ref{convergence}).
For the loss minimization objective, the maximum border residual goes below the tolerance value of $10^{-3}$ after the 42nd iteration.
The objective values for the loss minimization OPF is 26.5 kW, 19.6 kW, and 11.8 kW, for 10, 50, and 100\% DER penetration, respectively.
Similarly, for $\Delta V$ minimization, we can see that the maximum residual goes below the tolerance after the 42nd iteration as well.
Thus the convergence is related to the network size, but not to the number of controllable variables. 
\begin{figure}[t!]
  \centering
  \begin{subfigure}{0.75\columnwidth}
    \hspace*{-0.15in}
    \includegraphics[width=\columnwidth]{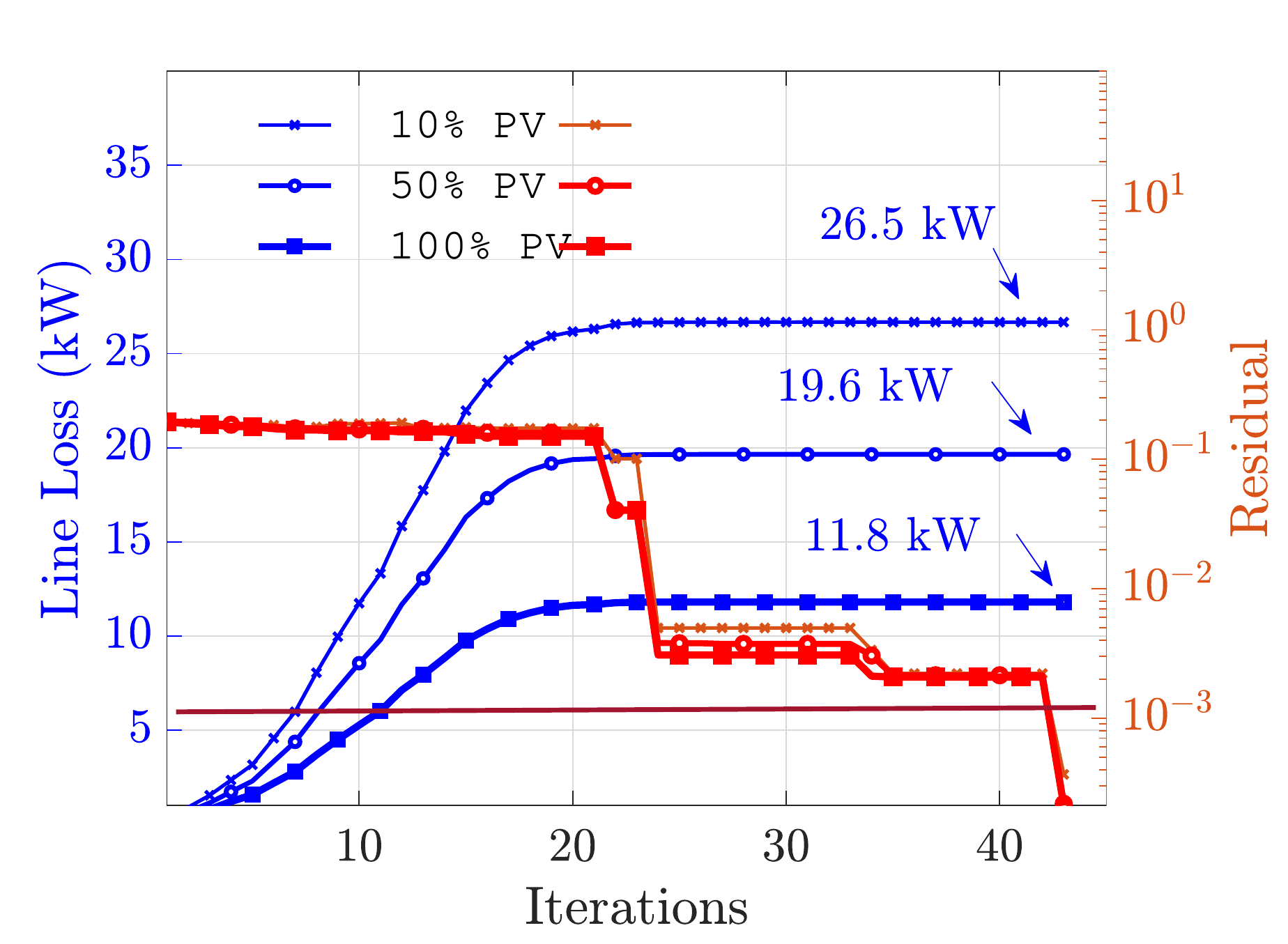}
    \vspace*{-0.4cm}
    \caption{Loss Minimization}
  \end{subfigure}
  \vspace*{-0.3cm}
  \begin{subfigure}{0.78\columnwidth}
    \includegraphics[width=\columnwidth]{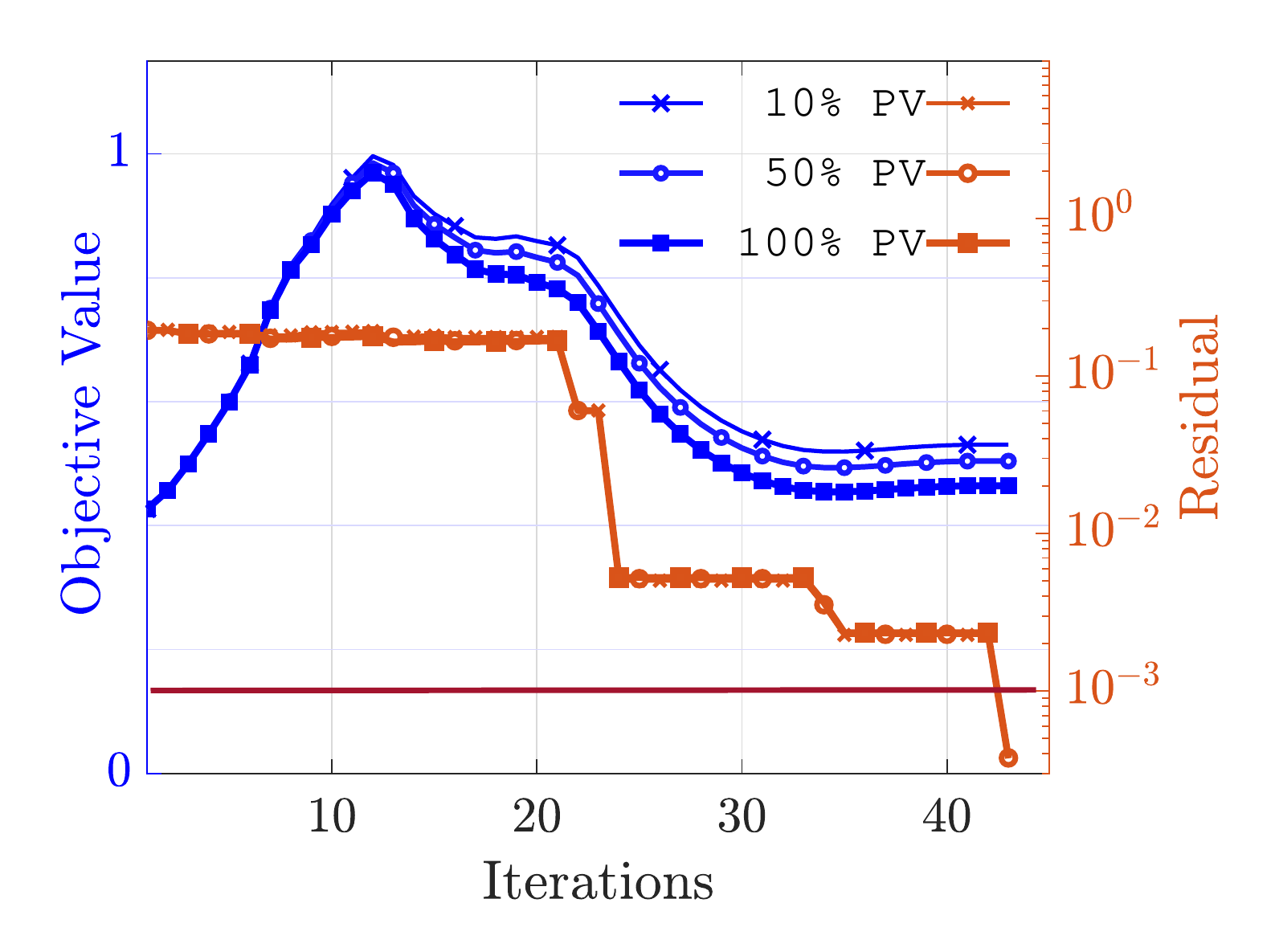}
    \vspace*{-0.6cm}
    \caption{$\D$V Minimization}
  \end{subfigure}
  \vspace*{0.2cm}
  \caption{Max Residual and Objective Value Convergence
  }
  \vspace*{-0.4cm}
  \label{convergence}
\end{figure}

\begin{figure}[ht!]
  \centering
  \begin{subfigure}{0.49\columnwidth}
    \includegraphics[width=\columnwidth]{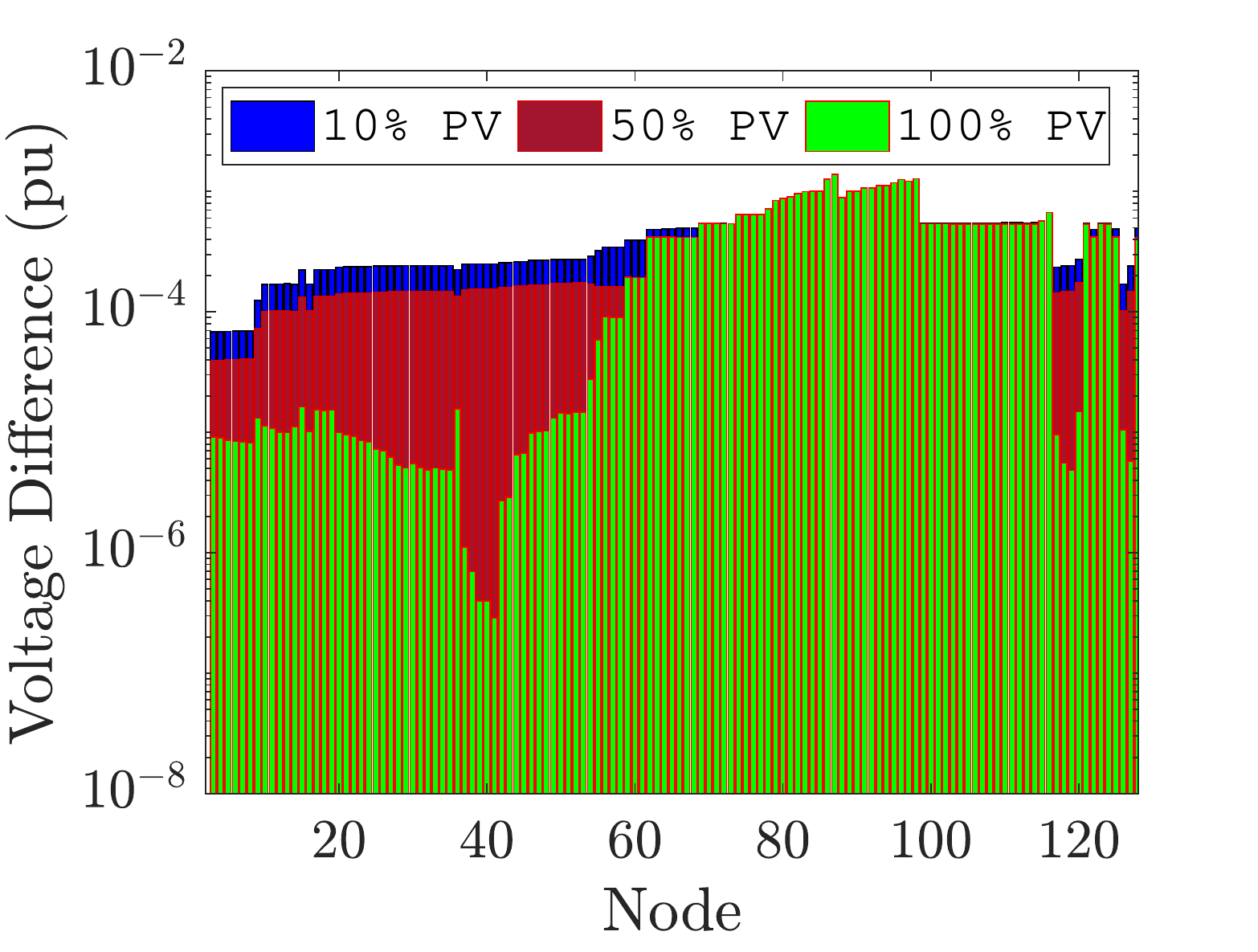}
    \caption{Loss Minimization}
  \end{subfigure}
  \begin{subfigure}{0.49\columnwidth}
    \includegraphics[width=\columnwidth]{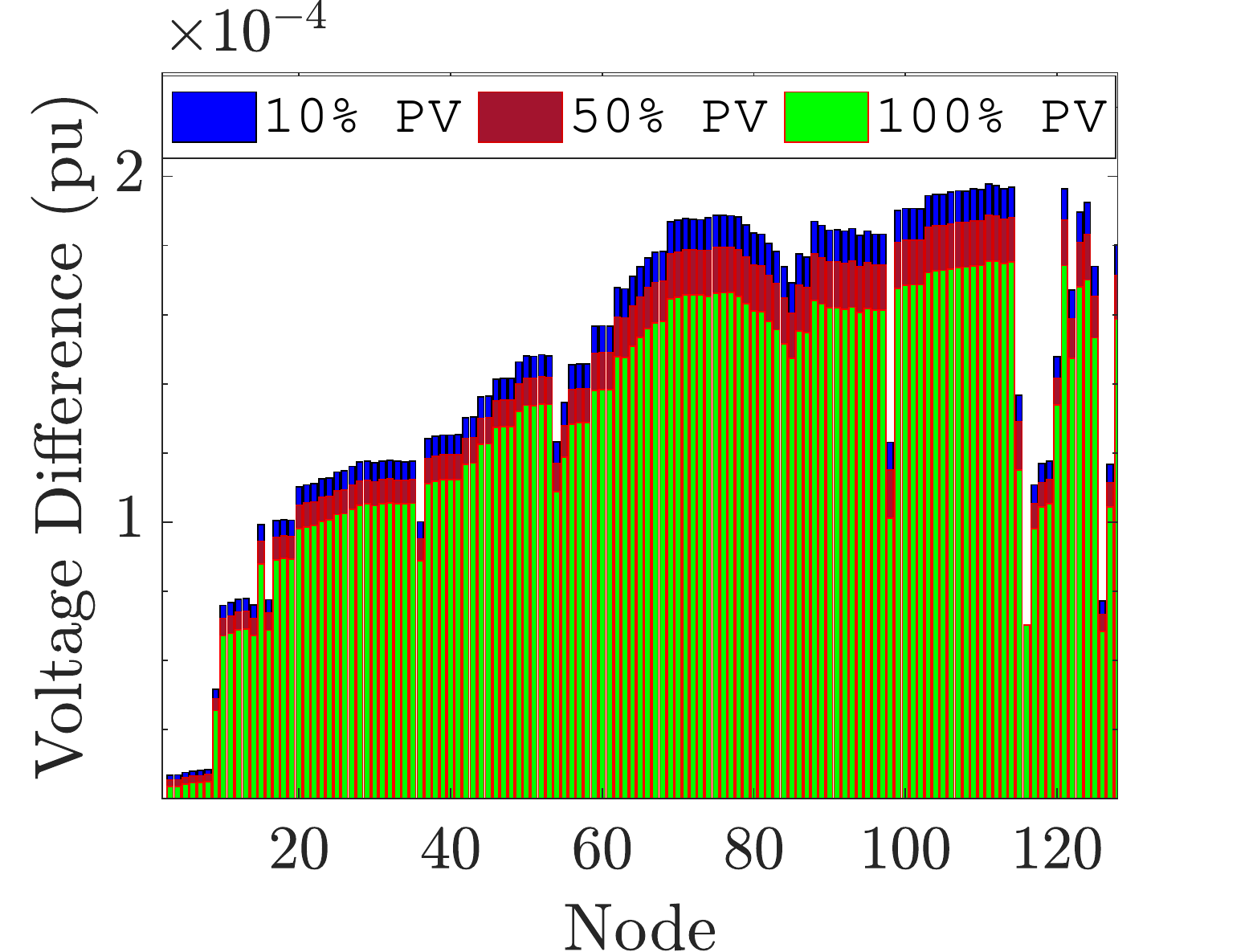}
    \caption{$\D$V Minimization}
  \end{subfigure}
  \caption{Validation: Nodal Voltage Comparison
  }
  \vspace{-0.5cm}
  \label{Validation}
\end{figure}

\begin{figure}[!h]
  \centering
  \begin{subfigure}{0.49\columnwidth}
    \includegraphics[width=\columnwidth]{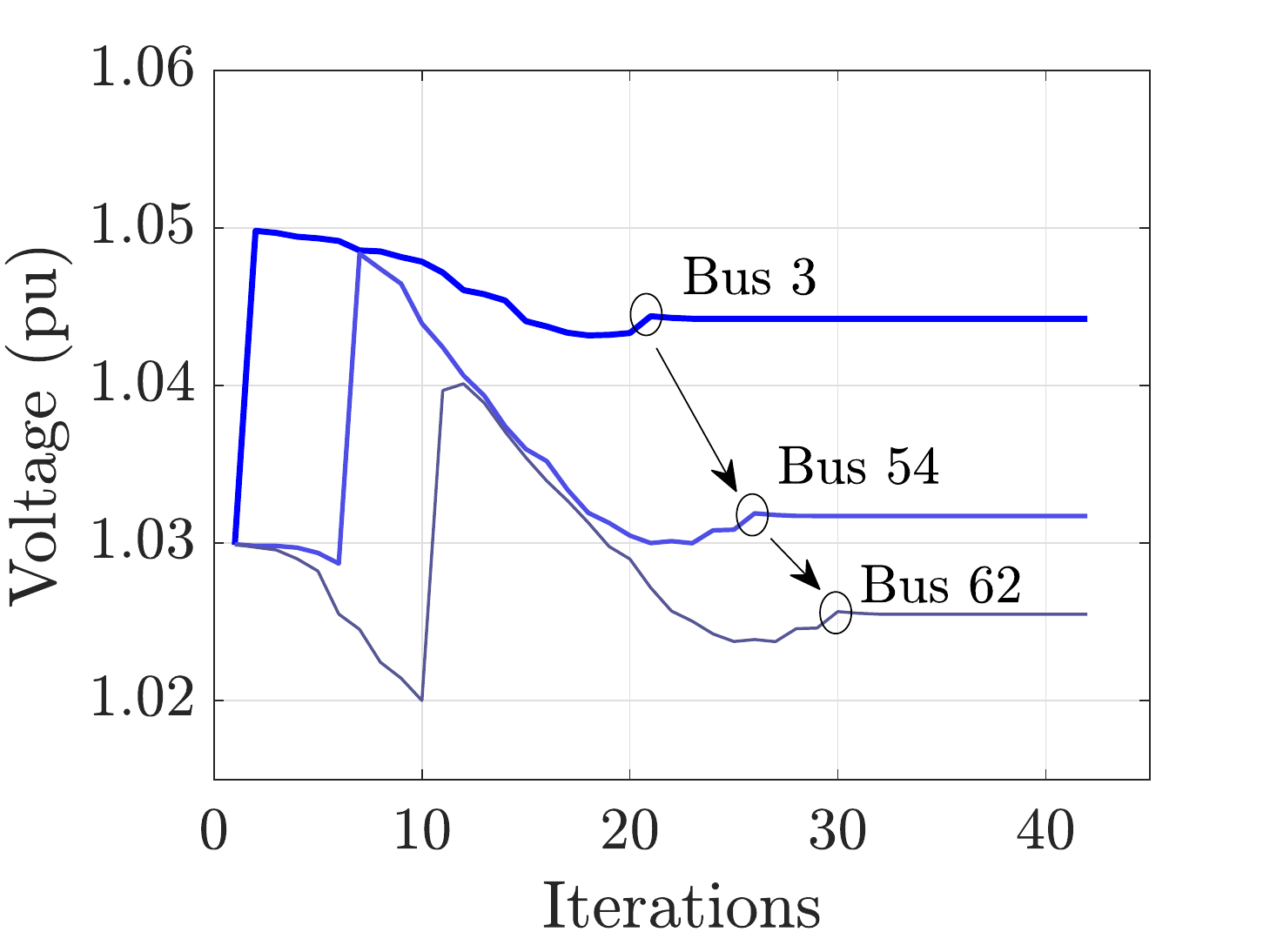}
    \caption{Loss Minimization}
  \end{subfigure}
  \begin{subfigure}{0.49\columnwidth}
    \includegraphics[width=\columnwidth]{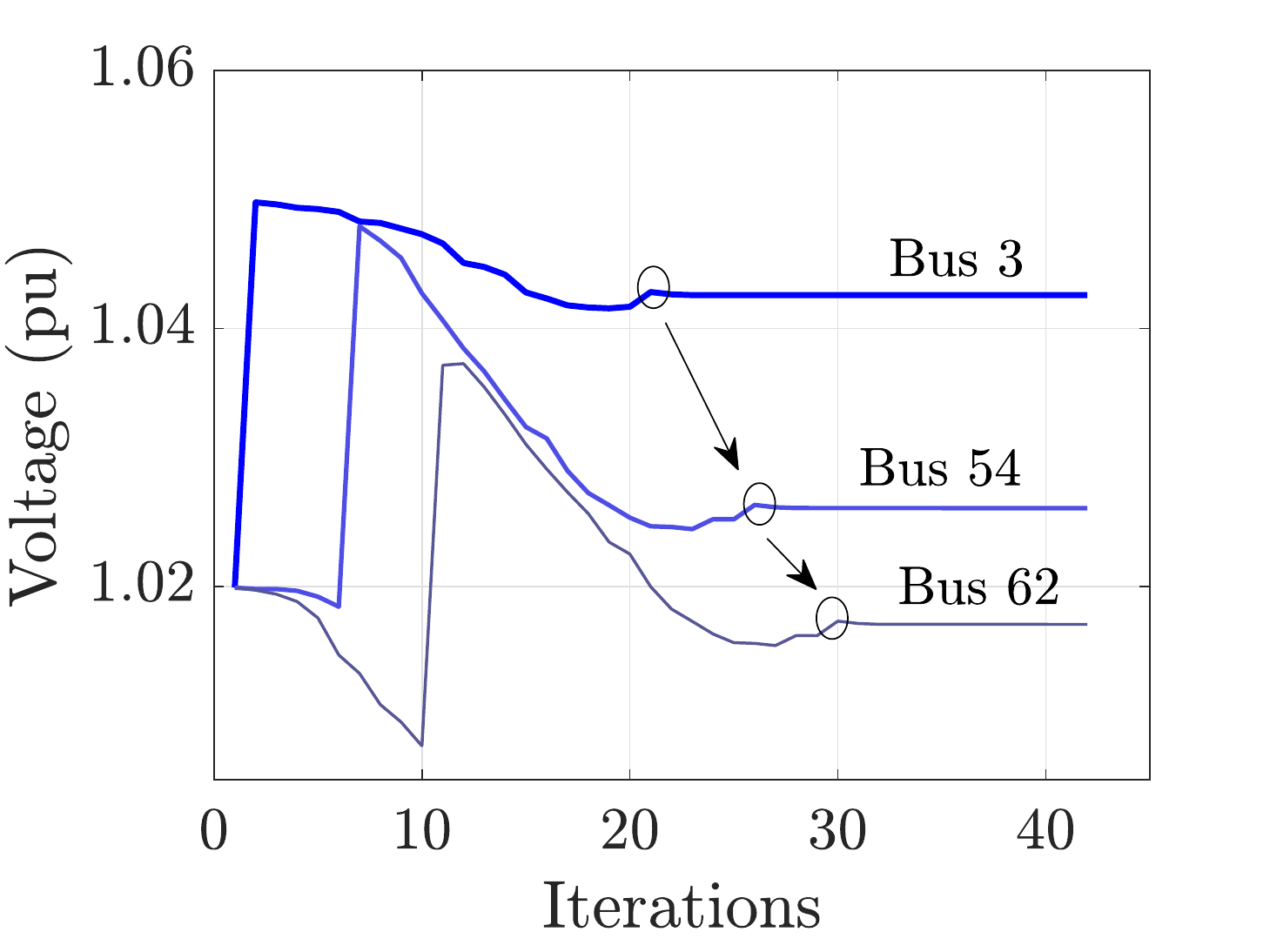}
    \caption{$\D$V Minimization}
  \end{subfigure}
  \caption{Convergence: From Root to Leaf nodes
  }
  \label{Voltage_conv}
  \vspace{-0.5cm}
\end{figure}

\begin{small}
\begin{table}[!b]
    \centering
    \vspace{-0.5cm}
    \caption{{Objective Value Comparison}}
    \vspace{-0.2cm}
    \label{comp_table}
    \setlength{\tabcolsep}{4.5pt}
    \begin{tabular}{|c|c|c|c|c|}
    \hline
    \multirow{1}{*}{\textbf{OPF Problem}}& \multirow{1}{*}{\textbf{Method}}  & \multirow{1}{*}{\textbf{10\% PV}} & \multirow{1}{*}{\textbf{50\% PV}}  & \multirow{1}{*}{\textbf{100\% PV}}\\
    \hline
    \multirow{2}{*}{Loss Min (kW)}& Central & 26.4 & 19.6 & 11.78 \\ \cline{2-5}
      & ENDiCo-OPF  &  26.5 & 19.6 & 11.80\\
    \hline
    \multirow{2}{*}{$\Delta V$ Min (pu)}& Central & 0.5300 & 0.5038 & 0.4640 \\ \cline{2-5}
       & ENDiCo-OPF  & 0.5306 & 0.5042 & 0.4642 \\
    \hline
    % \vspace{-0.4cm}
    \end{tabular}
   % \vspace{-0.5cm}
\end{table}
\end{small}

\begin{figure*}[ht!]
  \centering
  \begin{subfigure}{0.672\columnwidth}
    \includegraphics[width=\columnwidth]{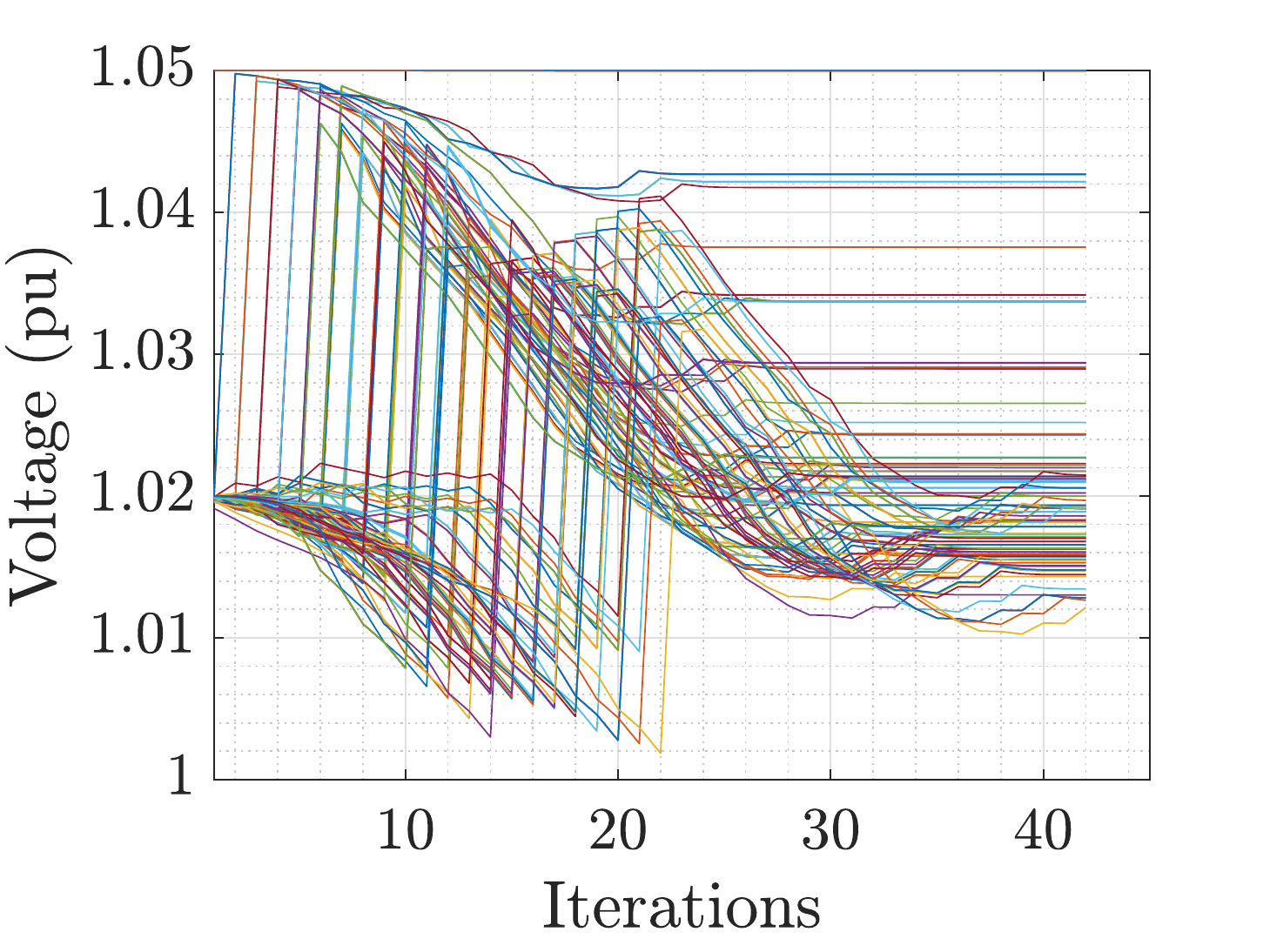}
        \caption{Loss Minimization: 10\% PV Penetration}
  \end{subfigure}
  \begin{subfigure}{0.672\columnwidth}
    \includegraphics[width=\columnwidth]{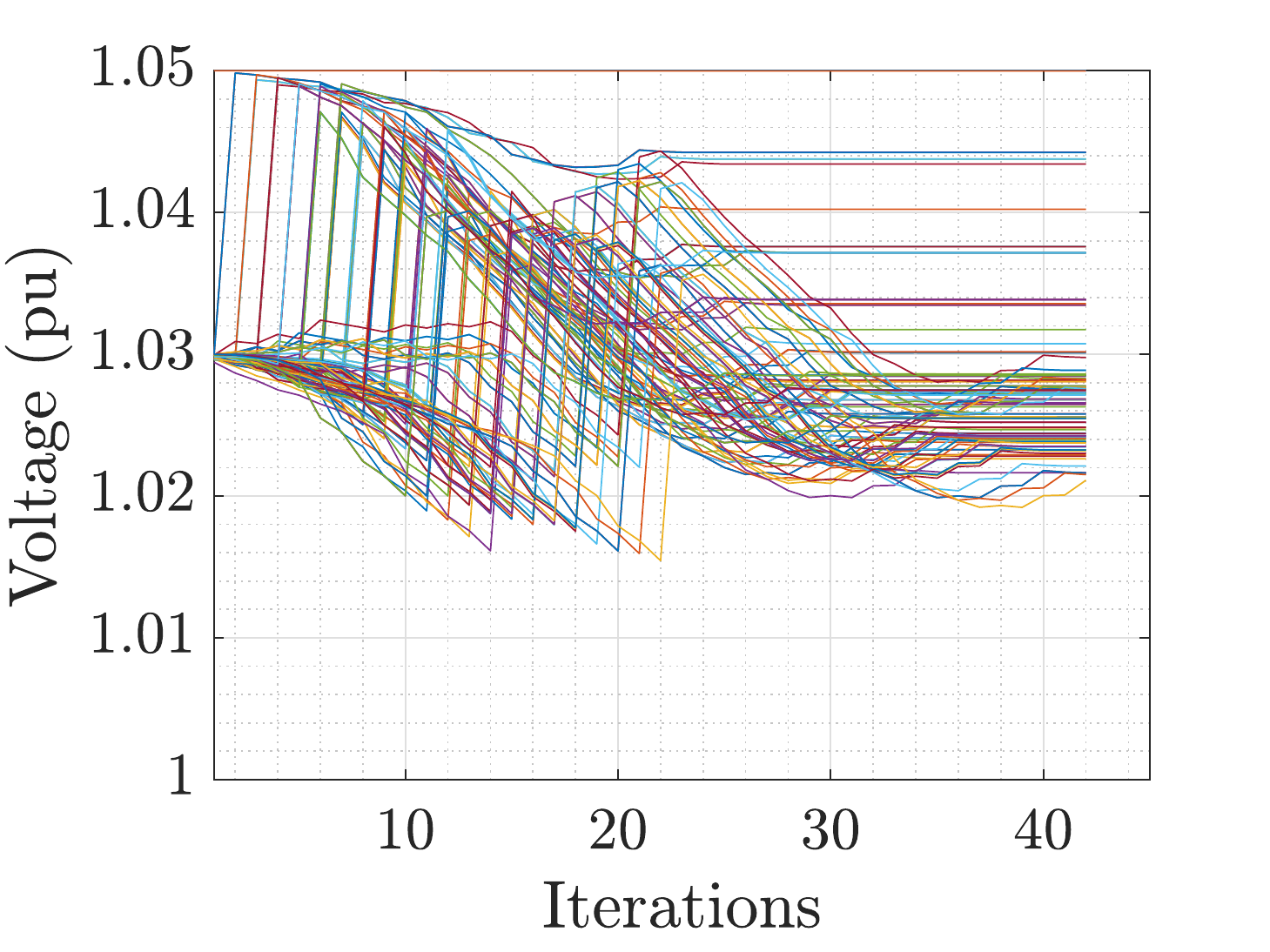}
    \caption{Loss Minimization: 50\% PV Penetration}
  \end{subfigure}
  \begin{subfigure}{0.672\columnwidth}
    \includegraphics[width=\columnwidth]{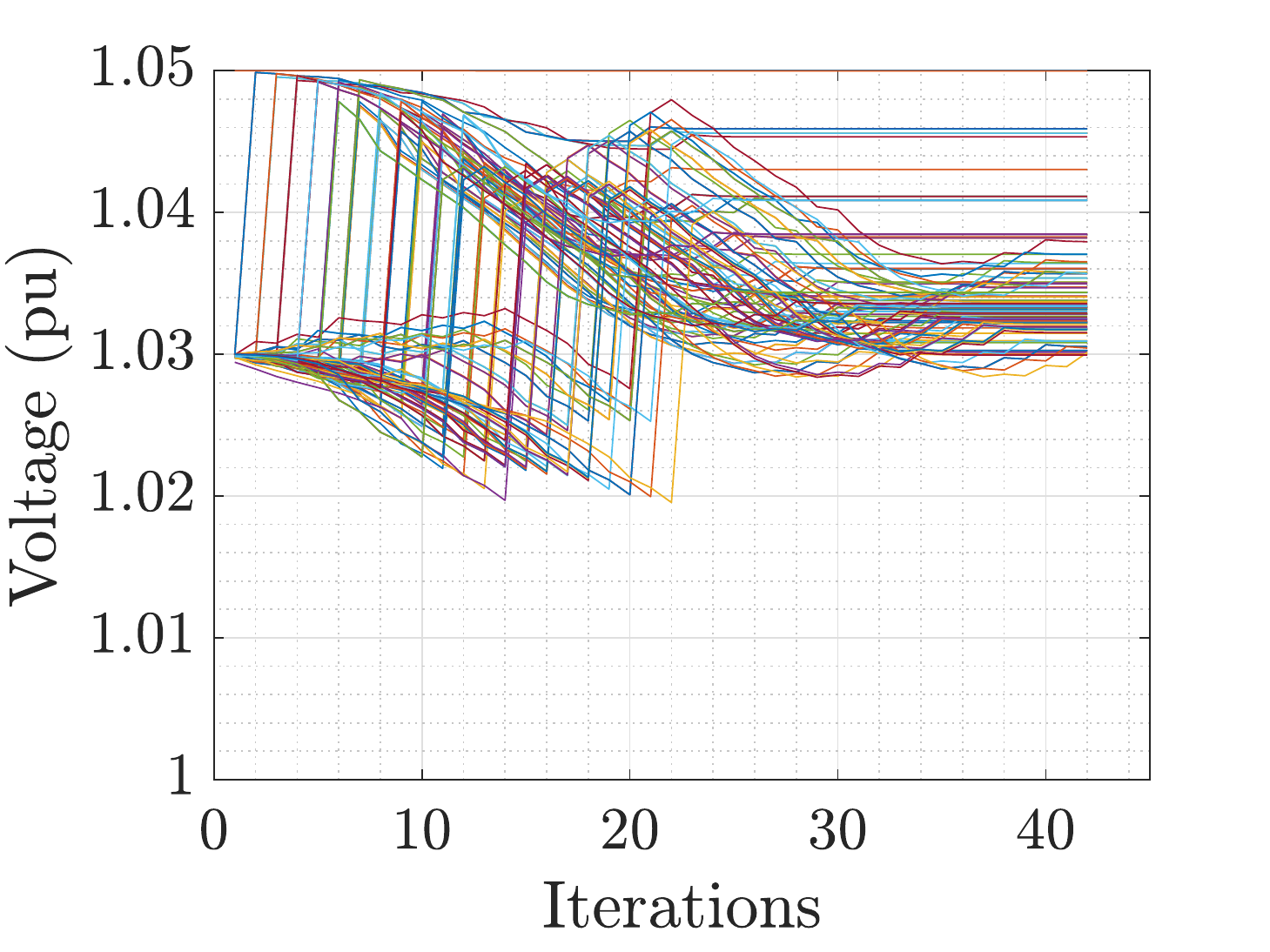}
    \caption{Loss Minimization: 100\% PV Penetration}
  \end{subfigure}
  \begin{subfigure}{0.672\columnwidth}
    \includegraphics[width=\columnwidth]{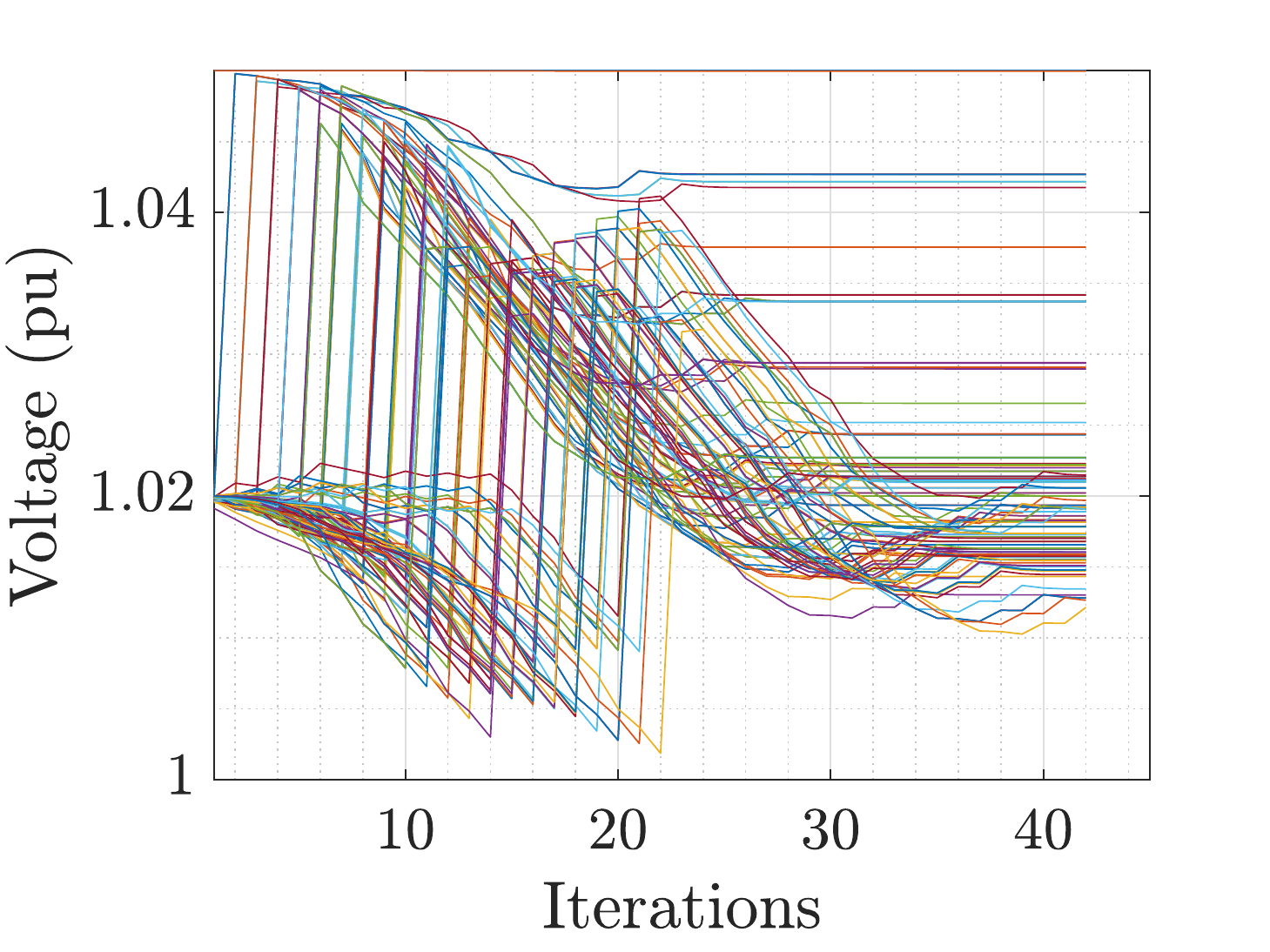}
    \caption{$\D$V Minimization: 10\% PV Penetration}
  \end{subfigure}
  \begin{subfigure}{0.672\columnwidth}
    \includegraphics[width=\columnwidth]{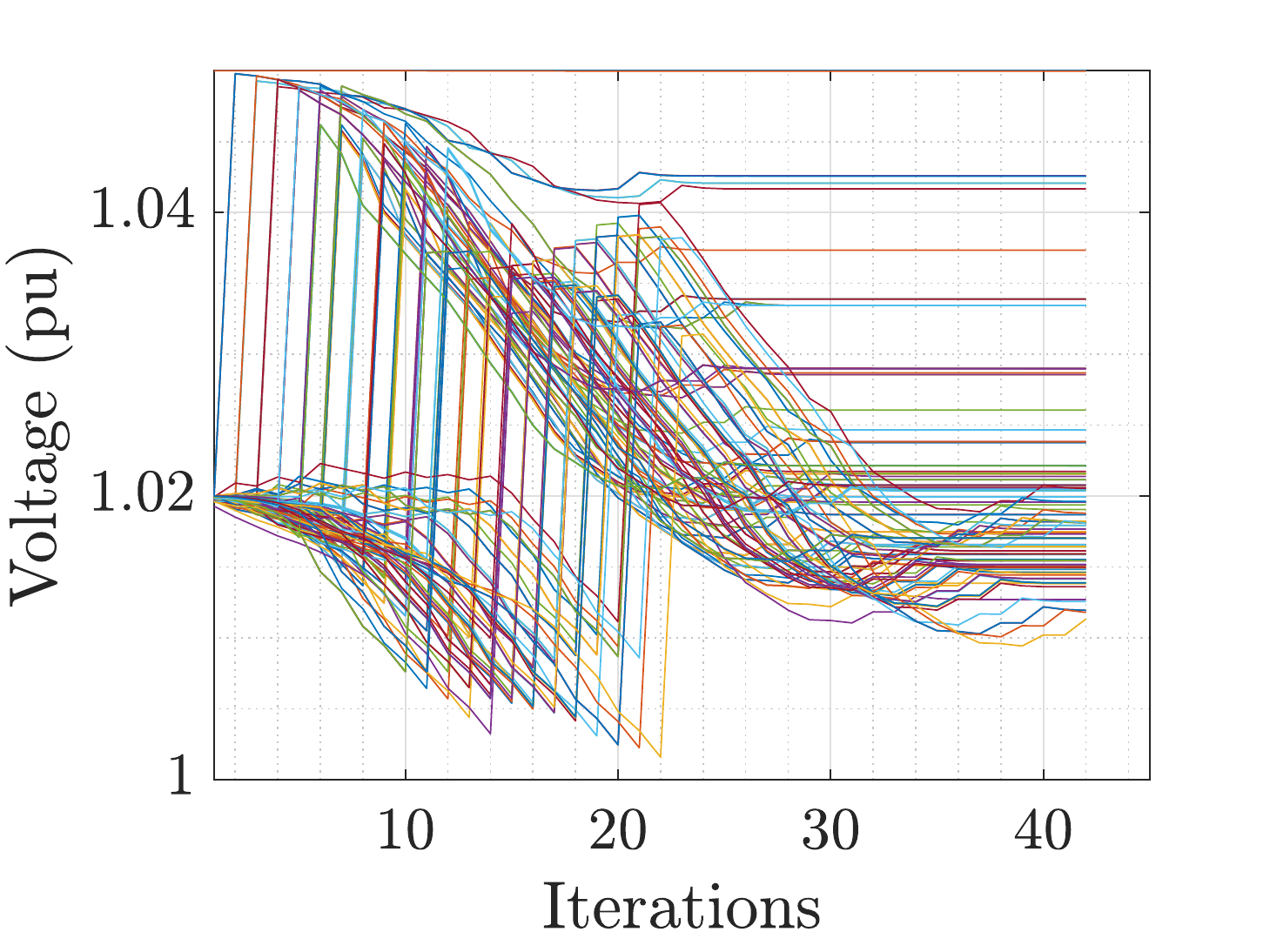}
    \caption{$\D$V Minimization: 50\% PV Penetration}
  \end{subfigure}
  \begin{subfigure}{0.672\columnwidth}
    \includegraphics[width=\columnwidth]{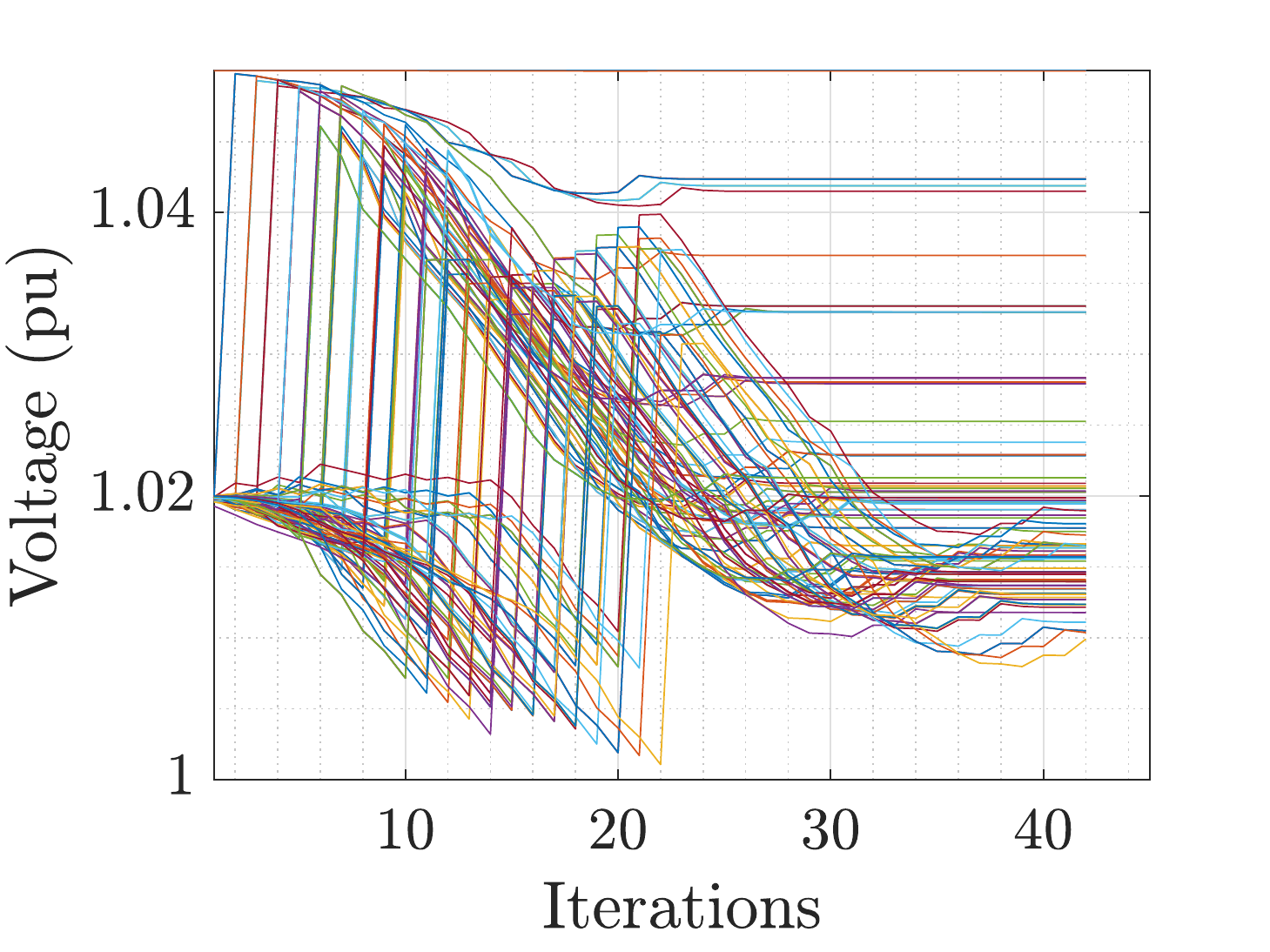}
    \caption{$\D$V Minimization: 100\% PV Penetration}
  \end{subfigure}
  \caption{Boundary Variable Convergence with respect to iterations for Loss (top row) and $\D$V (bottom row) minimizations.} \label{boundV}
\end{figure*}

\subsubsection{Validation of the Optimal Solution}
Besides a faster convergence, we also present the efficacy of the distributed OPF controller in terms of the optimality gaps and feasibility.
To this end, we have compared (a) the objective values,
% with the centralized solution
and (b) the nodal voltages with the centralized solution
% and power flow solver OpenDSS
(see Fig.~\ref{Validation}).
It can be observed in Table~\ref{comp_table} that the value of the objective functions from centralized and distributed solutions matches for all the cases.
For example, for the 100\% DER penetration case, the line loss is 11.80 kW for proposed ENDiCo-OPF method, and the central solution is 11.78 kW.
Similar comparisons can be found for other DER/PV penetration cases with different OPF objectives.
This validates the solution quality of ENDiCo-OPF.
Further, we can see in Fig.~\ref{Validation} that upon implementing ENDiCo-OPF, the difference in nodal voltages in the system and those from a centralized solution is in the order of $10^{-4}$; this is true for both OPF objectives.
% Further, when the decision variables from the distributed controllers are implemented in the OpenDSS, the nodal voltages from the OPF solutions match with the OpenDSS solutions as well; the maximum difference is still in the order of $10^{-4}$.
This validates the feasibility of ENDiCo-OPF.

\subsection{Numerical Experiments of Convergence}
In this section, we provide further simulated results on the convergence of the proposed ENDiCo-OPF method.
Here we showcase the boundary variable convergence with respect to iterations, as well as their properties.
We also compare the numerical convergence results with the theoretical analysis presented in Section \ref{sec:cnvgcanal}.

\subsubsection{Convergence at the Boundary} The convergence of the boundary variables (shared boundary voltage) for the simulated cases has been shown in Fig. \ref{Voltage_conv} and Fig.\ref{boundV}; Fig. \ref{Voltage_conv} shows the boundary variables for 100\% PV cases for 3 different locations that helps to visualize the convergence of the shared variable w.r.t. the distance from the root node (substation node).
Specifically, from Fig. \ref{boundV}, we can see that after the initial values, the shared variables (shared nodal voltages) suddenly changes abruptly till 22$^{nd}$ iterations.
This location, i.e., iteration number, where this sudden changes happen depends on the distance of that shared node from the root node (substation node).
For example, `Bus 3' is 2 node distant from the substation, and thus this abrupt changes happen at the 2nd iteration (Fig. \ref{Voltage_conv}); similarly, `Bus 62' is 11 node distant from the substation, and that changes happen at the $11^{th}$ iteration as well.
Along with these characteristics, the overall convergence properties of the shared variables are consistent with both objectives and for all the PV penetration cases as well (Fig. \ref{boundV}).
It corroborates with the statement that the convergence properties is not dependent on the OPF objective or the DER penetration percentage, but rather dependent on the system network.
Instead of using a flat start with $1.02$ pu for the controller, a measured voltage initialization would reduce the iteration number; however, we would like to mention that, this method is robust enough to initialize with any reasonable flat start values.

\subsubsection{Discussion on Convergence}
In \cref{sec:cnvgcanal}, we guaranteed the convergence of the proposed method under some sufficient conditions.
Specifically, we showed (i) convergence of the local sub-problem for a given iteration step (Theorem \ref{thm:subsyscvgce}), (ii) convergence of the local sub-problem over iteration steps (Theorem \ref{thm:single-bd}), and (iii) convergence over time for a line network with multiple nodes (Theorem \ref{thm:globalcvgnc}).
At the same time, our numerical experiments demonstrated similar convergence behavior for more general (than line) networks.

The condition for the convergence of the local sub-problem in a single iteration step is expressed in equation \eqref{eq:maincdn}.
Generally for a stable electric power supply in a power distribution system, $v_i$ and $P_{ij},~Q_{ij}$ are in the order of $1$ pu., and the corresponding line parameters, i.e., $r_{ij},~ x_{ij}$, are both in the order of $10^{-2}$ or less.
This guarantees that condition \eqref{eq:maincdn} is always satisfied for a practical power distribution system.
In our simulated cases, line parameters are also in the order of $\le 10^{-2}$, thus satisfying the condition for Theorem \ref{thm:subsyscvgce}.
Further, this also satisfies most of the sufficient conditions for Theorem \ref{thm:single-bd} and \ref{thm:globalcvgnc}, except the first conditions, i.e., $x_{ij}-r_{ij}\ge 0$ of both of the theorems.
We note that these are sufficient conditions, and that we can observe overall convergence for the cases where $x_{ij}-r_{ij} < 0$ as well.
For the simulation cases, while other sufficient conditions hold true, we observed $x_{ij}-r_{ij} < 0$ for some of the lines, but still the controller converged.
In addition, Fig.~\ref{Voltage_conv} showcases the same result as Theorem \ref{thm:globalcvgnc}.
The node that is closer to the root node/substation node, i.e., the node with a strong voltage source, converges earlier than the node that is more distant from the root node.
For instance, ``Bus 3'', which is two nodes away from the substation, converges earlier than the ``Bus 62'' that is 11 nodes distant from the substation.
Bus 3 converges around the 22nd iteration, whereas Bus 62 converges around 30th iteration for both loss and $\Delta$V minimization optimization problems.

\subsection{Comparison against Centralized OPF and an ADMM-based Distributed OPF Approach}
Table \ref{table:time_comp_table} compares the total solve time for the three different algorithms: a centralized OPF, the proposed distributed ENDiCo-OPF, and an ADMM-based distributed OPF. All three approaches are applied to both objective functions: loss minimization and voltage deviation minimization. The simulation was performed using a Core i7-8550U CPU @ 1.80GHz with 16GB of memory. All three algorithms use fmincon solver from MATLAB to solve the associated nonlinear optimization problems. Since all algorithms use the same compute system and same nonlinear solver, the simulation results provided are appropriate to demonstrate the relative improvements observed via the proposed distributed algorithm. The results show that the proposed distributed approach is significantly faster than both the centralized and ADMM-based distributed OPF methods. For example, for the loss minimization problem with $100\%$ PV penetration, the solution time for ENDiCo-OPF is only $0.71$ seconds, while the centralized OPF and ADMM-based distributed OPF take $15.8$ seconds and $37.3$ minutes, respectively. Moreover, the solution time for the centralized OPF increases with the increase in the number of controllable nodes (i.e. \%PV penetration). The proposed ENDiCo-OPF method, however, scales well even for larger number of controllable nodes. 

\begin{table}[!h]
    \centering
    \caption{Performance Comparison: Solution Time}
    \label{table:time_comp_table}
    \setlength{\tabcolsep}{4.5pt}
    \begin{tabular}{|c|c|c|c|c|}
    \hline
    \multirow{1}{*}{\textbf{OPF Problem}}& \multirow{1}{*}{\textbf{Method}}  & \multirow{1}{*}{\textbf{10\% PV}} & \multirow{1}{*}{\textbf{50\% PV}}  & \multirow{1}{*}{\textbf{100\% PV}}\\
    \hline
    \multirow{3}{*}{Loss Min}& Centralized OPF & 4.2 sec & 10.6 sec & 15.8 sec\\ \cline{2-5}
      & ENDiCo-OPF  &  0.67 sec & 0.71 sec & 0.71 sec\\\cline{2-5}
      & ADMM-based OPF  &  41.6 min & 36.5 min & 37.3 min\\
    \hline
    \multirow{3}{*}{$\Delta V$ Min }& Centralized OPF & 2.1 sec & 3.4 sec & 4.8 sec \\ \cline{2-5}
       & ENDiCo-OPF  & 0.68 sec & 0.70 sec & 0.70 sec \\ \cline{2-5}
       & ADMM-based OPF  &  120 min & 119 min & 121 min\\
    \hline
    % \vspace{-0.4cm}
    \end{tabular}
   % \vspace{-0.5cm}
\end{table}

Additionally, compared to ADMM-based approach, the proposed ENDiCo-OPF method also reduces the required number of communication rounds/iterations by order of magnitudes. Besides the iteration counts, the developed method converges faster compared to the ADMM-based distributed OPF. Figure \ref{fig:comparison_ADMM} illustrates the convergence properties of the objective values for both the ADMM-based method and the ENDiCo-OPF method for $100\%$ PV penetration cases. As can be observed, the ADMM-based method requires $7,000$ and $10,00$ iterations for loss minimization and $\Delta$V minimization problems, respectively. It is worth noting that the proposed ENDiCo-OPF method requires only 42 iterations for both cases, which highlights the effectiveness of the developed method. These results demonstrate that the proposed ENDiCo-OPF method outperforms both centralized OPF and ADMM-based distributed OPF methods.
\begin{figure}[ht!]
  \centering
  \vspace{0.2 cm}
  \begin{subfigure}{0.48\columnwidth}
    \includegraphics[width=\columnwidth]{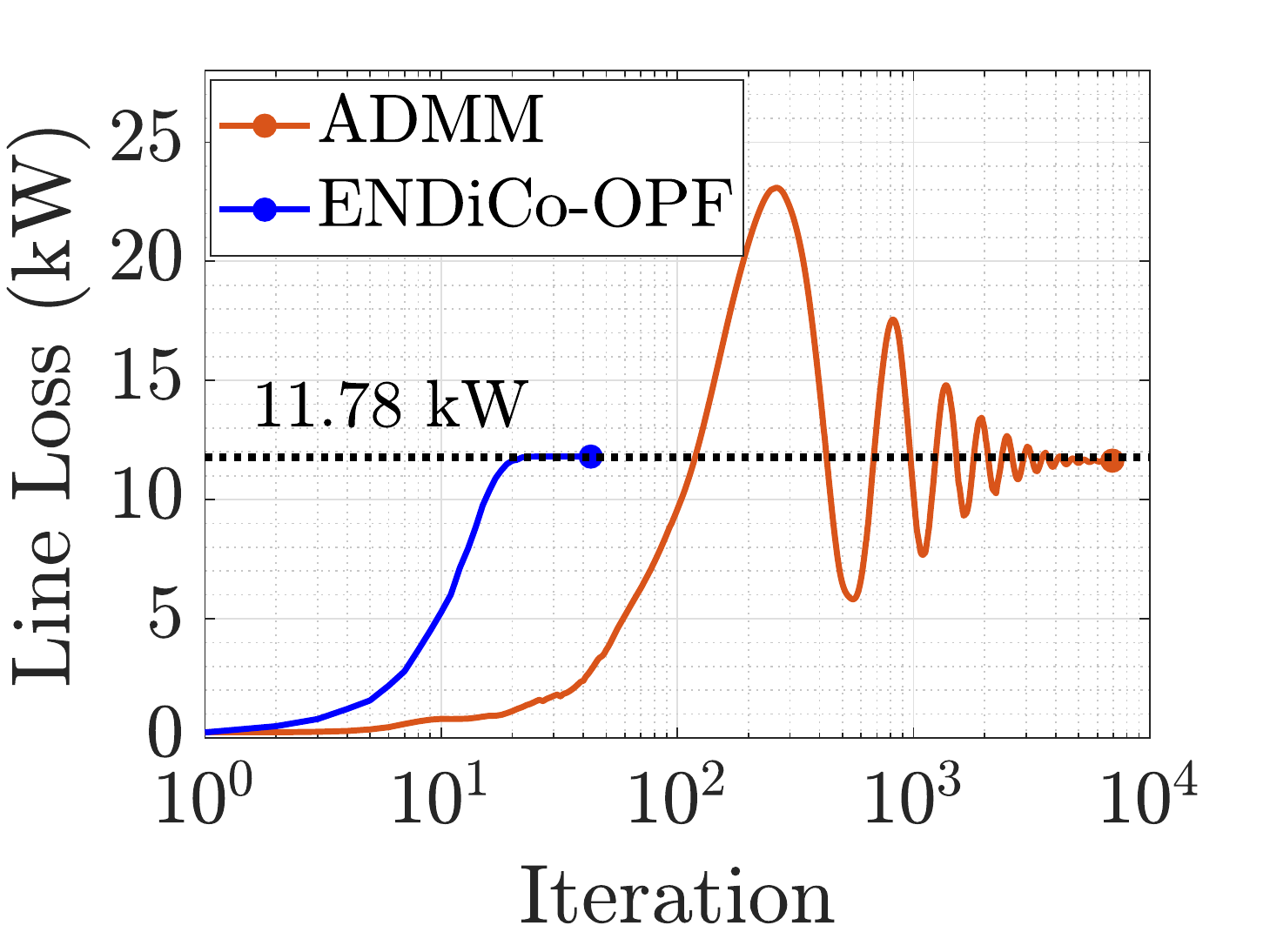}
    \caption{Loss Minimization}
  \end{subfigure}
  \begin{subfigure}{0.48\columnwidth}
    \includegraphics[width=\columnwidth]{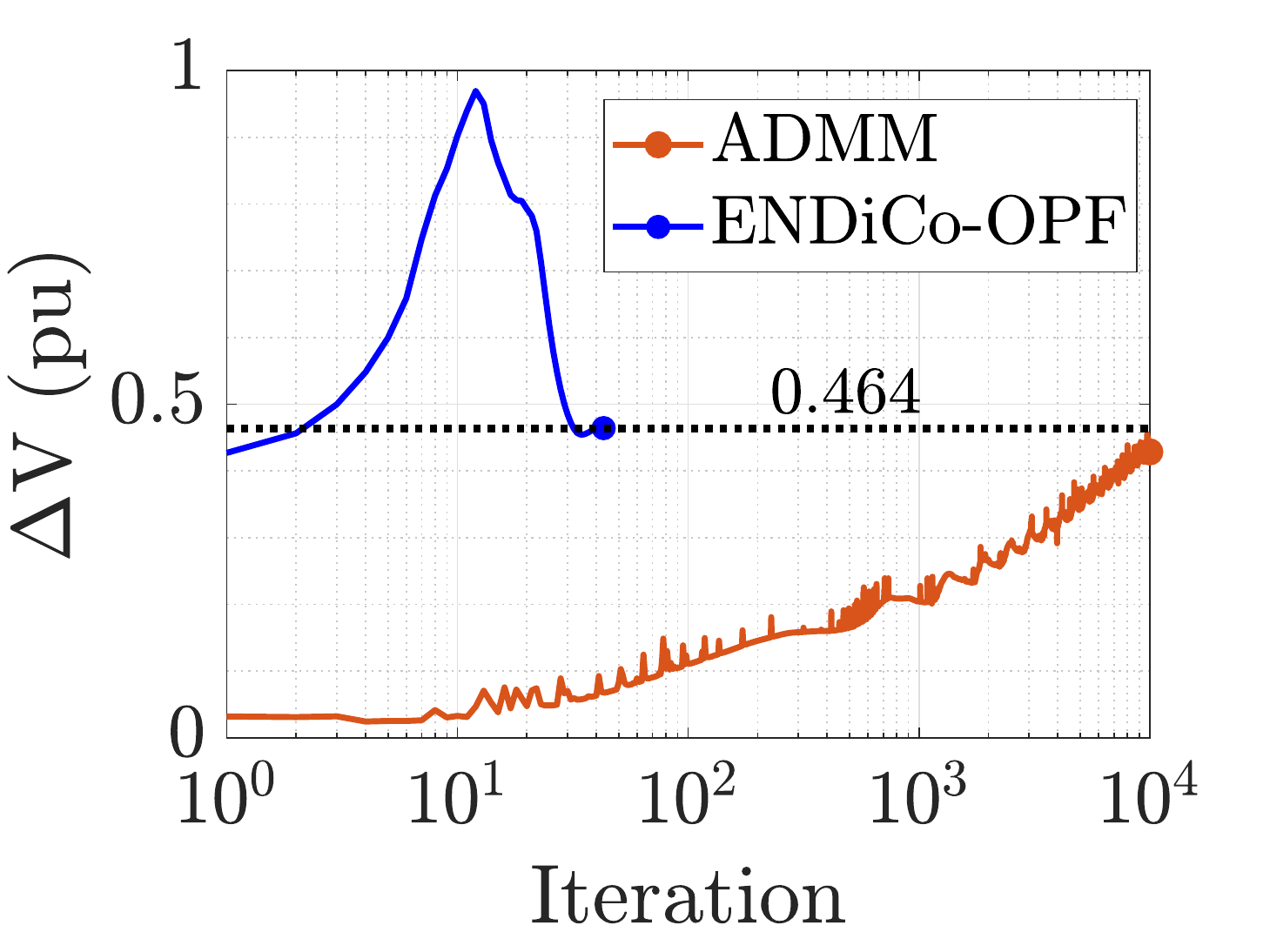}
    \caption{$\Delta$V Minimization}
  \end{subfigure}
  \caption{Comparison of the Convergence of Objective Values.}
  \vspace{-0.5cm}
  \label{fig:comparison_ADMM}
\end{figure}

\subsection{Applicability and Extension to Real-world Setting}
Although in this paper, we focus on a single-period optimization problem, the proposed distributed algorithm has been numerically demonstrated under different realistic test conditions, including for a large-scale single-phase system consisting of over $50,000$ variables \cite{sadnan2023distributed}, three-phase unbalanced systems \cite{S_N_2021distributed_GM}, and simulations conducted under diverse communication conditions \cite{gray2021effects}. We would also like to emphasize that the improvement in computational speed for single-period optimization, observed in this work, will help scale more complex versions of OPF problems, including multi-period and stochastic versions. 

Another major challenge relate to optimization under fast varying conditions. The existing literature, employs online optimization approaches where the algorithm doesn't wait to obtain optimal solution, but rather takes step towards the steepest decent direction \cite{qu2019optimal,bolognani2014distributed}. In our previous work, we have extended the proposed distributed approach to a setting similar to online optimization techniques \cite{sadnan2020real}. Our simulations show that the proposed approach is able to efficiently track the optimal solution, even for rapidly changing system conditions, modeled as fast varying load and PV injections.

A reliable communication system is crucial for the practical viability of the distributed OPF algorithms. The communication system is needed to exchange the boundary variables among distributed agents and arrive at a converged system-level optimal solution. Therefore, the convergence, speed and accuracy of distributed OPF methods depend upon the communication systems conditions. It is imperative to evaluate the impacts of communication system-specific attributes (such as, latency, bandwidth, reliability) on the convergence of distributed OPF algorithm. Related literature includes simplified analysis to numerically evaluate the effects of communication system-related challenges \cite{patari2022distributed}. In our prior work, we have used a cyber-power co-simulation platform, using HELICS, to evaluate several related concerns \cite{gray2021effects}. Additional work is needed to both numerically and analytically evaluate the effects of communication system attributes on the convergence and performance of distributed optimization methods. One can also determine an optimal communication system design to meet the required performance for distributed optimization methods.

\section{Conclusions}
The optimal coordination of growing DER penetrations requires computationally efficient models for distribution-level optimization.
In this paper, we have developed a nonlinear distributed optimal power flow algorithm with convergence guarantees using network equivalence methods.
Then we present sufficient conditions to guarantee the convergence of the proposed method. While our most general sufficient conditions for global convergence over time are presented for line networks, our numerical simulations demonstrate similar convergence behavior for more general, e.g., radial, networks. The numerical simulation on the IEEE 123 bus test system corroborates the theoretical analysis. The proposed distributed method is also validated by comparing the results with a centralized formulation. Developing similar sufficient conditions for global convergence of radial networks, or other general network topologies, will be of high interest.

%\subsubsection{Convergence Rate}{\color{red} Yunqi: Put results fromt the numerical simulation for convergence rate}

%\bibliographystyle{IEEEtran}
%\bibliographystyle{ieeetr}
%\bibliography{references}
%\bibliography{references,Ref_RobFeas}
\input{main.bbltex}

\end{document}